\documentclass[a4paper,11pt,twoside,reqno]{amsart}

\usepackage[utf8]{inputenc}
\usepackage[plainpages=false,pdfpagelabels=true]{hyperref}
\usepackage{amssymb,amsthm}
\usepackage[margin=1in]{geometry}
\usepackage{slashed}
\usepackage{comment}
\usepackage{caption}
\newtheorem{Satz}{Theorem}[section]
\newtheorem{Prop}[Satz]{Proposition}
\newtheorem{Lem}[Satz]{Lemma}

\newtheorem{Thm}[Satz]{Theorem}

\theoremstyle{definition}
\newtheorem{Dfn}[Satz]{Definition}
\newtheorem{Bem}[Satz]{Remark}
\newtheorem{Bsp}[Satz]{Example}
\newcommand{\tr}{\operatorname{Tr}}

\newcommand{\vol}{{\operatorname{vol}}}
\newcommand{\dv}{\text{ }dv}

\newcommand{\norm}[1]{\left \lVert #1 \right \rVert}
\allowdisplaybreaks[1]

\renewcommand{\epsilon}{\varepsilon}

\newcommand{\R}{\ensuremath{\mathbb{R}}}
\newcommand{\N}{\ensuremath{\mathbb{N}}}

\newcommand{\s}{\ensuremath{\mathbb{S}}}
\numberwithin{equation}{section}

\usepackage{color}

\title{A framework for producing harmonic maps}
\author{Volker Branding}
\date{\today}
\address{University of Rostock, Institute of Mathematics\\
Ulmenstraße 69, 18057 Rostock, Germany}
\email{volker.branding@uni-rostock.de}

\author{Anna Siffert}
\address{Universität M\"unster, Mathematisches Institut\\
Einsteinstr. 62\\
48149 M\" unster\\
Germany}
\email{asiffert@uni-muenster.de}

\subjclass[2010]{58E20; 53C43}
\keywords{generalized radial projection; eigenmap}

\begin{document}
\begin{abstract}
We present a general framework for producing families of harmonic maps from the unit ball into the Euclidean sphere starting from a fixed harmonic map. In particular, our analysis provides a theoretical background for a generalized radial projection which was recently introduced by Nakauchi.

Our approach is based on work of Toth who established a machinery that generates eigenmaps between spheres from a given eigenmap.

Finally, we point out how these families of harmonic maps can be used to construct solutions to associated variational problems such as
intrinsic biharmonic and triharmonic maps, $p$-harmonic maps as well as extrinsic polyharmonic maps. 
\end{abstract} 

\maketitle

\section{Introduction}
Geometric variational problems are a modern direction of research within differential geometry. Primary objects of study are various energy functionals whose critical points characterize particular maps between Riemannian manifolds. Such energy functionals may contain derivatives of the map up to an arbitrary order, however, the larger the number of derivatives, the more technical details need to be sorted out.

Regarding the energy functionals that are natural from a geometric point of view we mention the energy of a map
\begin{align}
\label{harmonic}
E(\phi)=\frac{1}{2}\int_M|d\phi|^2\dv,    
\end{align}
where \(\phi\colon M\to N\) is a smooth map from a compact Riemannian manifold \((M,g)\) to a Riemannian manifold \((N,h)\). 
The critical points of \eqref{harmonic} are determined by the vanishing of the so-called \emph{tension field} which is given by

\begin{align*}
    0=\tau(\phi):=\tr\bar\nabla d\phi,\qquad \tau(\phi)\in\Gamma(\phi^\ast TN).
\end{align*}
Here, \(\bar\nabla\) represents the connection on the pull-back bundle \(\phi^\ast TN\). Solutions of \(\tau(\phi)=0\) are called \emph{harmonic maps}.
The harmonic map equation \(\tau(\phi)=0\) represents a second order semilinear partial differential equation.
For additional information on harmonic maps, we refer to the book \cite{MR2044031}.

In the case of maps \(u\colon M\to\s^n\subset\R^{n+1}\) to the Euclidean sphere, the 
vanishing of the tension field is equivalent to the identity
\begin{align*}
    \Delta u+|\nabla u|^2u=0.
\end{align*}

\smallskip

One specific critical point of the energy \eqref{harmonic} which has been extensively studied in the literature is the
equator map, i.e. the map given by
\begin{align}
\label{equator}
u^{(1)}_{\star}\colon B^m&\rightarrow\s^m\subset\mathbb{R}^m\times\mathbb{R},\\ 
  \notag x&\mapsto\big(u^{(1)}(x),0\big),
\end{align}
where 
$$u^{(1)}:B^m\rightarrow\s^{m-1}, \qquad x\mapsto\frac{x}{\norm x}$$ is the well-studied radial projection map.
Clearly, $u^{(1)}\in W^{1,2}(B^m,\R^{m+1})$ for \(m>2\). 
The equator map $u^{(1)}_{\star}$ is a well-known example of a harmonic map, see for example \cite{MR705882}.

Recently, Nakauchi \cite{MR4593065} constructed a generalized radial projection map $u^{(\ell)}:B^m\rightarrow\s^{m^\ell-1}$, where $\ell\in\mathbb{N}$ is a parameter and $u^{(1)}$ coincides with the radial projection map \(u^{(1)}\) introduced above. More details on this generalized radial projection map can be found in Subsection \ref{sub-Nakauchi}.
The generalized radial projection map gives rise to a generalized equator map as follows
\begin{align}
\label{equator-generalized}
u^{(\ell)}_{\star}\colon B^m&\rightarrow\s^{m^{\ell}}\subset\mathbb{R}^{m^{\ell}}\times\mathbb{R},\\ 
  \notag x&\mapsto\big(u^{(\ell)}(x),0\big).
\end{align}

We would like to point out that one motivation to study maps 
such as \(u^{(\ell)}\), in particular for \(\ell=1,2\), in the 1980's by Giaquinta and Ne\v{c}as
was to find counterexamples to the regularity of weak solutions of elliptic systems that were investigated at that time, see for example \cite{MR581329,MR1401417,MR566246}.

\smallskip

One of the advantages of the generalized equator map, as opposed to the equator map, is the fact that it contains an additional parameter \(\ell\in\N\), which gives an additional degree of freedom, in particular, when it comes to deformations of the generalized equator map.

\medskip

In this paper, we present a technique for constructing additional \lq generalized equator maps\rq\, derived from the so-called eigenmaps, which are sphere-valued harmonic maps that possess constant density
\(e(\phi):=\frac{1}{2}|d\phi|^2\).
Our technique is inspired from the work of Toth \cite{MR1154839,MR1241953}, where a general theory for producing eigenmaps from a given eigenmap is established.
In order to state our main theorem we briefly need to introduce the following mathematical objects. 
First, recall that a map $\varphi:\s^m\rightarrow\s^r$ between Euclidean spheres is an eigenmap of degree $p$, or short $\lambda_p$-eigenmap, if and only if all components of $\varphi$ are spherical harmonics of order $p$ on $\s^m$, see e.g. \cite{MR1242555}. 
Further, recall that a spherical harmonic on $\s^m$ of order $p$ is the restriction to $\s^m$ of a homogeneous harmonic polynomial of degree $p$ in $m+1$ variables. 
By $\mathcal{H}_q^{\R^n}$ we represent the space of homogeneous harmonic polynomials of order $q$ in $n$ variables and $(\mathcal{H}_q^{\R^n})^{\star}$ is the corresponding dual space.
Below $D$ denotes a nonzero module homomorphism of an orthogonal $\mbox{SO}(m+1)$-module $W$ into $(\mathcal{H}_p^{\R^n})^{\star}\otimes\mathcal{H}_q^{\R^n}$.  
We write \((~~)^D\) in order to denote the application of the module homomorphism \(D\) to the expression written in brackets.

\smallskip

The following is our main theorem:

\begin{Thm}
\label{thm:main}
Let $\varphi:\s^{m}\rightarrow\s^d$ be an eigenmap and denote the associated $(d+1)$-tuple consisting of the corresponding homogeneous harmonic polynomials
by $\Phi:\mathbb{R}^{m+1}\rightarrow\mathbb{R}^{d+1}$.
Let $p$ be the degree of the homogeneous polynomials which are the components of $\Phi$.
Set 
\begin{align*}
 \nu^{(p)}(x):=\frac{\Phi(x)}{\norm{x}^{p}}.
\end{align*}
Let $D$ be a nonzero module homomorphism of an orthogonal $\mbox{SO}(m+1)$-module $W$ into $(\mathcal{H}_p^{\mathbb{R}^{m+1}})^{\star}\otimes\mathcal{H}_q^{\mathbb{R}^{m+1}}$.
We define $\nu^{(q)}$ as follows
\begin{align*}
\nu^{(q)}\colon&\mathbb{\R}^{m+1}\setminus\{0\}\rightarrow\mathbb{R}^{(d+1)\dim W},\\
&x \mapsto\frac{1}{\norm{x}^{q}}\big(\norm{x}^{p}\nu^{(p)}(x)\big)^D.
\end{align*}    
The map $\nu^{(q)}$ is a homogeneous polynomial in the variables $(y_i)_{i=1,\dots,m}$, where $y_i\colon=\frac{x_i}{\norm{x}}$
and satisfies the following properties:
\begin{enumerate}
    \item $\nu^{(q)}$ is sphere-valued, i.e. $\norm{\nu^{(q)}(x)}^2=1$ for all $x\in\mathbb{\R}^{m+1}\setminus\{0\}$;
    \item the energy density of $\nu^{(q)}$ is given by
    \begin{align*}
 \norm{\nabla \nu^{(q)}(x)}^2=\frac{q(q+m-2)}{\norm{x}^2};
\end{align*}
 \item $\nu^{(q)}$ is a harmonic map, i.e. it is a solution of the differential equation
 \begin{align*}
     \Delta \nu^{(q)}+\norm{\nabla  \nu^{(q)}}^2\nu^{(q)}=0.
 \end{align*}
\end{enumerate}
\end{Thm}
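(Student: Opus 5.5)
The plan is to reduce everything to Toth's theory of eigenmaps, used as a black box, together with a general fact about degree-zero homogeneous extensions of eigenmaps.

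\textbf{Step 1: reduction to Toth's theorem.} By Toth's construction \cite{MR1154839,MR1241953}, applying a nonzero module homomorphism $D\colon W\to(\mathcal{H}_p^{\R^{m+1}})^\star\otimes\mathcal{H}_q^{\R^{m+1}}$ to the eigenmap $\varphi$ gives a new map $\Phi^D:=(\Phi)^D\colon\R^{m+1}\to\R^{(d+1)\dim W}$. After the normalization built into the construction, each component of $\Phi^D$ is a homogeneous harmonic polynomial of degree $q$. Moreover $\norm{\Phi^D(x)}^2=\norm{x}^{2q}$, so its restriction to $\s^m$ is a $\lambda_q$-eigenmap.

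I would check the norm identity using $\mathrm{SO}(m+1)$-equivariance. The function $x\mapsto\norm{\Phi^D(x)}^2$ is built from $\Phi$, whose orthogonal-invariance data enter only through $\sum_j\Phi_j(x)\Phi_j(y)$. Equivariance of $D$ then makes $\norm{\Phi^D}^2$ an $\mathrm{SO}(m+1)$-invariant homogeneous polynomial of degree $2q$. Hence it is a constant multiple of $\norm{x}^{2q}$, and the constant can be normalized to $1$.

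I expect this step to be the main obstacle. The difficulty is making precise how $D$ acts on the vector-valued $\Phi$, and justifying invariance with $\Phi$ rather than only with the full orthonormal basis of $\mathcal{H}_p$. For this I would rely on Toth's result that the construction depends only on the induced $\mathrm{SO}(m+1)$-invariant data of $\varphi$.

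\textbf{Step 2: homogeneity and part (1).} Since $\norm{x}^p\nu^{(p)}=\Phi$, we have $\nu^{(q)}(x)=\Phi^D(x)/\norm{x}^q=\Phi^D(x/\norm{x})$. By homogeneity of degree $q$, this is a homogeneous polynomial in the variables $y_i$. Part (1) follows immediately from the norm identity in Step 1.

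\textbf{Step 3: parts (2) and (3) for a general eigenmap.} Let $P\colon\R^{m+1}\to\R^N$ have harmonic components, homogeneous of degree $q$, with $\norm{P}^2=\norm{x}^{2q}$. Set $\nu=P/r^q$ with $r=\norm{x}$.

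\emph{Energy density.} Apply $\Delta$ to $\norm{P}^2=r^{2q}$. Since the components of $P$ are harmonic, this gives
\begin{align*}
2\norm{\nabla P}^2=2q(2q+m-1)r^{2q-2}.
\end{align*}
Next differentiate $\nu=r^{-q}P$:
\begin{align*}
\nabla\nu=r^{-q}\nabla P-qr^{-q-2}P\otimes x .
\end{align*}
Euler's identity gives $(\nabla P)x=qP$, and $\norm{P}^2=r^{2q}$, so the cross term and the last term combine to
\begin{align*}
\norm{\nabla\nu}^2=r^{-2q}\norm{\nabla P}^2-q^2r^{-2}.
\end{align*}
Substituting the value of $\norm{\nabla P}^2$ yields
\begin{align*}
\norm{\nabla\nu}^2=\frac{q(2q+m-1)-q^2}{r^2}=\frac{q(q+m-1)}{r^2}.
\end{align*}
This has $m-1$ where the theorem has $m-2$. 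I read the discrepancy as an indexing convention: the domain ball is $B^m$, so the paper's $m$ corresponds to $m+1$ ambient variables here, and I would state the constant accordingly.

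\emph{Harmonic map equation.} Use $\Delta(fg)=f\Delta g+2\nabla f\cdot\nabla g+g\Delta f$ with $\Delta P=0$, $\Delta r^{-q}=-q(m+1-q-2)r^{-q-2}$ in $m+1$ dimensions, and $\nabla r^{-q}\cdot\nabla P=-q^2r^{-q-2}P$ (by Euler). This gives
\begin{align*}
\Delta\nu=-\bigl(2q^2+q(m-1-q)\bigr)r^{-q-2}P=-\frac{q(q+m-1)}{r^2}\,\nu .
\end{align*}
Combined with the energy density computed above, this is exactly $\Delta\nu+\norm{\nabla\nu}^2\nu=0$, which is part (3).
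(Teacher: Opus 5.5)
Your proposal is correct and follows the paper's route. The paper likewise takes Toth's theorem (Theorem~\ref{thm:operators}) as a black box to get that $\Phi^D|_{\s^m}$ is a $\lambda_q$-eigenmap. It then reruns the base case of Theorem~\ref{thm:main-sec3} (radial/tangential splitting of $\nabla\Phi$ plus Lemma~\ref{lem:delta}), and your $\Delta\norm{P}^2$ computation is an equivalent substitute for that. Your invariance heuristic in Step~1 is not needed and would not work as stated, since $\sum_j\Phi_j(x)\Phi_j(y)$ is not $\mathrm{SO}(m+1)$-invariant for a general eigenmap such as the Hopf map; relying on Toth is the right call.

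You are also right about the constant: on $\R^{m+1}$ one gets $q(q+m-1)/\norm{x}^2$. The printed $q(q+m-2)$, like the index range $i=1,\dots,m$ for the $y_i$, is an off-by-one carried over from Theorem~\ref{thm:main2}, which is stated on $\R^{m}$.
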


Clearly, each $\nu^{(q)}:B^{m+1}\rightarrow\s^{(d+1)\dim W -1}$ constructed in Theorem\,\ref{thm:main} gives rise to a generalized equator map
via
\begin{align}
\label{equator-generalized_2}
\nu^{(q)}_{\star}\colon B^{m+1}&\rightarrow\s^{(d+1)\dim W}\subset\mathbb{R}^{(d+1)\dim W}\times\mathbb{R},\\ 
  \notag x&\mapsto\big(\nu^{(q)}(x),0\big).
\end{align}

\smallskip

\subsection{Applications} 
In order to highlight the importance of Theorem \ref{thm:main}
we present a number of mathematical results established in a series of articles \cite{BS25tri,BS25kstable,BS25bistable} in which 
generalized radial projections were used to construct biharmonic, triharmonic and extrinsic k-harmonic maps. In addition, the stability of these maps was also studied and the first example of a stable biharmonic to the sphere
was found using this approach \cite{BS25bistable}. In order to shorten the notation we will always denote the dimension of the target by \(n\).

Let us first recall the most notable generalization of the energy functional, which is
the intrinsic bienergy functional, or simply the bienergy. It is defined by
\begin{align}
\label{eq:bienergy}
E_2(\phi):=\frac{1}{2}\int_M |\tau (\phi)|^2 \dv.    
\end{align}

Critical points of the bienergy functional $E_2$ are called \textit{intrinsic biharmonic maps}, or simply \textit{biharmonic maps}, and
are characterized by the vanishing of the  \textit{bitension field} $\tau_2$, which is given by
\begin{align}
\label{tau2}    
\tau_2(\phi):= \bar\Delta \tau (\phi) +\textrm{Tr}_g R^N(\tau(\phi), d\phi) d\phi.
\end{align}
Here, $\bar\Delta$ denotes the rough Lapacian acting on sections of the pullback bundle $\phi^\ast TN$.
The biharmonic map equation \(\tau_2(\phi)=0\) represents a fourth order semilinear elliptic partial differential equation and the large number of derivatives introduces substantial mathematical difficulties.

It is evident that harmonic maps serve as examples of biharmonic maps.
From now on, we will limit our focus to non-harmonic biharmonic maps, referred to as \textit{proper} biharmonic maps.
For more details on biharmonic maps in Riemannian geometry we refer to the recent book \cite{MR4265170}.

\smallskip

In the particular case of maps to the sphere, the bienergy \eqref{eq:bienergy} takes the form
\begin{align*}
  E_2(u)=\frac{1}{2}\int_M\big(|\Delta u|^2-|\nabla u|^4)\dv, 
\end{align*}
where \(u\colon M\to\s^n\subset\R^{n+1}\), and whose critical points are given by
\begin{align}
\label{eq:biharmonic-intro}
\Delta^2u+2\operatorname{div}\big(|\nabla u|^2\nabla u\big)
-\big(\langle\Delta^2u,u\rangle-2|\nabla u|^4\big)u=0.
\end{align}

\smallskip

Despite being natural from a geometric perspective, the bienergy \eqref{eq:bienergy} lacks coercivity, making it difficult to show the existence of critical points.
However, the generalized radial projection map is a powerful tool to construct biharmonic maps to the sphere. This approach was initiated in \cite{MR4830603,MR4076824}, the most general result in this regard was obtained by the authors in \cite[Theorem 3.2]{BS25tri},
see also \cite{MR5037916}, and reads as:

\begin{Satz}
\label{thm:biharmonic-main}
Let $m,q\in\mathbb{N}$.
The map
\(Q:\mathbb{R}^{m}\setminus\{0\}\rightarrow \s^{n}\)
given by
\begin{align*}
Q:=\big(\sin\alpha\cdot \nu^{(q)},\cos\alpha\big), \qquad \alpha\in (0,\frac{\pi}{2})
\end{align*}
where \(\nu^{(q)}:\mathbb{R}^{m}\setminus\{0\}\rightarrow \R^{n}\) is defined in Theorem\,\ref{thm:main},
is a proper biharmonic map if and only if
the following equation is satisfied 
\begin{align}
\label{eq:constraint-biharmonic}
\sin^2\alpha=\frac{q(q+m-2)+2m-8}{2q(q+m-2)}.
\end{align}    
\end{Satz}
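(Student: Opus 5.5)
The plan is to insert the ansatz $Q=(\sin\alpha\,\nu,\cos\alpha)$, with $\nu=\nu^{(q)}$, directly into the Euler--Lagrange equation \eqref{eq:biharmonic-intro}. The only inputs needed are the three properties of $\nu$ from Theorem~\ref{thm:main}: $|\nu|=1$, $|\nabla\nu|^2=\lambda/r^2$ with $\lambda:=q(q+m-2)$ and $r=\norm{x}$, and $\Delta\nu=-\lambda r^{-2}\nu$. Here $\nu$ is defined on $\R^m\setminus\{0\}$, so the domain dimension is $m$. Since the last component of $Q$ is constant, we get
\begin{align*}
\nabla Q&=(\sin\alpha\,\nabla\nu,0), & |\nabla Q|^2&=\sin^2\alpha\,\lambda r^{-2}, & \Delta Q&=-\sin\alpha\,\lambda r^{-2}(\nu,0).
\end{align*}
In particular $\tau(Q)=\Delta Q+|\nabla Q|^2Q=-\lambda r^{-2}\sin\alpha\cos\alpha\,(\cos\alpha\,\nu,-\sin\alpha)$, which is nonzero for $\alpha\in(0,\pi/2)$. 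So any biharmonic $Q$ of this form is automatically proper.

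The first computation is $\Delta^2Q$. Write $\Delta(r^{-2}\nu)=\Delta(r^{-2})\nu+2\nabla r^{-2}\cdot\nabla\nu+r^{-2}\Delta\nu$. The key observation is that $\nu$ is homogeneous of degree $0$, so $x\cdot\nabla\nu=0$ and the cross term vanishes. Since $\Delta r^{-2}=(2\cdot 4-2m)r^{-4}=(8-2m)r^{-4}$ in dimension $m$, this gives
\[
\Delta^2 Q=\sin\alpha\,\lambda\,(\lambda+2m-8)\,r^{-4}(\nu,0).
\]

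The second computation is $\operatorname{div}(|\nabla Q|^2\nabla Q)=\sin^3\alpha\,\lambda\,\operatorname{div}(r^{-2}\nabla\nu)$, taken componentwise in the $\nu$-slot. Radiality of $r^{-2}$ together with $x\cdot\nabla\nu=0$ again kills the gradient term, leaving $\sin^3\alpha\,\lambda\,r^{-2}\Delta\nu=-\sin^3\alpha\,\lambda^2 r^{-4}(\nu,0)$. For the normal part, $\langle\Delta^2Q,Q\rangle=\sin^2\alpha\,\lambda(\lambda+2m-8)r^{-4}$ and $|\nabla Q|^4=\sin^4\alpha\,\lambda^2r^{-4}$.

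Substituting into \eqref{eq:biharmonic-intro} and multiplying by $r^4/\lambda$, the equation becomes
\[
\sin\alpha\,(A-2\lambda\sin^2\alpha)(\nu,0)-\sin^2\alpha\,(A-2\lambda\sin^2\alpha)(\sin\alpha\,\nu,\cos\alpha)=0,\qquad A:=\lambda+2m-8.
\]
The last component gives $-\sin^2\alpha\cos\alpha\,(A-2\lambda\sin^2\alpha)=0$. The $\nu$-component gives $\sin\alpha\cos^2\alpha\,(A-2\lambda\sin^2\alpha)=0$. Because $\alpha\in(0,\pi/2)$, both vanish if and only if $2\lambda\sin^2\alpha=\lambda+2m-8$, which is exactly \eqref{eq:constraint-biharmonic}.

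The only delicate step is evaluating $\Delta^2Q$ and the divergence term, specifically justifying that the mixed terms $\nabla r^{-2}\cdot\nabla\nu$ vanish. This rests entirely on the $0$-homogeneity of $\nu^{(q)}$, which holds because $\nu^{(q)}$ is a polynomial in $y_i=x_i/\norm{x}$ by Theorem~\ref{thm:main}. The rest is bookkeeping with the Laplacian of powers of $r$ in dimension $m$.
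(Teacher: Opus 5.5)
Your proposal is correct and follows essentially the paper's route. The paper does not reprove the statement but cites \cite[Theorem 3.2]{BS25tri}, where the ansatz is substituted directly into \eqref{eq:biharmonic-intro} using the identities of Subsection~\ref{sub-Nakauchi} (energy density, harmonicity, \eqref{ortho}, and Lemma~\ref{lem:delta-nak} for $\Delta^2$), and your $0$-homogeneity computation $\Delta^2\nu=\lambda(\lambda+2m-8)r^{-4}\nu$ supplies exactly these inputs for $\nu^{(q)}$. The one tacit assumption is $\lambda=q(q+m-2)\neq 0$, which you need both for properness and for dividing by $\lambda$; it fails only for $m=q=1$, where \eqref{eq:constraint-biharmonic} is undefined anyway. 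You also rightly take $\lambda=q(q+m-2)$, the correct value for domain $\R^m$ as in Theorem~\ref{thm:main2}.
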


Note that for \(m\geq 3\) there exists \(q\in\N\) with \(q\leq m\) such that the condition \eqref{eq:constraint-biharmonic} can be satisfied, see \cite[Corollary 3.6]{BS25tri}

\medskip

It is then natural to ask if the biharmonic maps provided by Theorem \ref{thm:biharmonic-main} are stable or unstable critical points of the bienergy \eqref{eq:bienergy}. This question was answered by the authors in \cite{BS25bistable}, where the following criterion for stability was established:

\begin{Satz}
\label{thm:intrinsic}
The proper 
biharmonic map \(Q\colon B^m\to\s^{n},m\geq 5\), 
given by 
\begin{align*}
Q:=\big(\sin\alpha\cdot \nu^{(q)},\cos\alpha\big), \qquad \alpha\in (0,\frac{\pi}{2})
\end{align*}
is 
strictly stable if 
the inequality
\begin{align*}
m> 2(\sqrt{12q^2+30q+12}+3q+6)
\end{align*}
is satisfied.
\end{Satz}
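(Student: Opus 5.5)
The plan is to compute the second variation of $E_2$ at $Q$ explicitly, reduce it to a scalar quadratic form with singular coefficients of the type $|x|^{-2}$ and $|x|^{-4}$, and then control the negative terms by Hardy--Rellich inequalities on $B^m$. Throughout I use Theorem~\ref{thm:main}: $\nu^{(q)}$ is sphere-valued, harmonic, and has $|\nabla\nu^{(q)}|^2=\lambda/|x|^2$ with $\lambda:=q(q+m-2)$. I also use that, by Theorem~\ref{thm:biharmonic-main}, $\sin^2\alpha=\frac{\lambda+2m-8}{2\lambda}$.

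\textbf{Step 1: Hessian in ambient form.} I start from the second variation of the bienergy in the ambient formulation for maps into $\s^n\subset\R^{n+1}$, namely
\[
E_2(u)=\tfrac12\int(|\Delta u|^2-|\nabla u|^4).
\]
Take a compactly supported variation $V\in C_c^\infty(B^m\setminus\{0\},\R^{n+1})$ with $\langle V,Q\rangle=0$, and vary along $u_t=(Q+tV)/|Q+tV|$. Differentiating twice gives the Hessian $H(V,V)$. It consists of $\int|\Delta V|^2$, plus terms of the form
\[
\int c_1|\nabla Q|^2|\nabla V|^2,\qquad \int c_2\langle\nabla Q,\nabla V\rangle^2,\qquad \int c_3\big(|\nabla Q|^4,\ \Delta|\nabla Q|^2\big)|V|^2,
\]
plus terms involving $\langle V,\Delta Q\rangle$ and $\langle\Delta V,Q\rangle$. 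Here $|\nabla Q|^2=\sin^2\alpha\,\lambda/|x|^2$. Since $Q=(\sin\alpha\,\nu,\cos\alpha)$, I have $\Delta Q=-\sin\alpha\,\lambda|x|^{-2}(\nu,0)$. So $\Delta Q$ is a radial function times the fixed-direction-type field $(\nu,0)$.

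\textbf{Step 2: Decompose the variation.} I split $V$ into three pieces:
\begin{itemize}
\item a component $f$ along the unit field $E=(\cos\alpha\,\nu,-\sin\alpha)$, which is tangent to $\s^n$ and points in the $\alpha$-direction;
\item a component $W=(w,0)$ with $\langle w,\nu\rangle=0$;
\item the normal component, which is eliminated by the constraint.
\end{itemize}
Using $|\nu|=1$, harmonicity of $\nu$, and $\langle\nabla\nu,\nu\rangle=0$, the cross terms either cancel or reduce to integrals of the type $\int f\langle\nabla w,\nabla\nu\rangle|x|^{-2}$. I will bound these by Cauchy--Schwarz against the diagonal terms.

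\textbf{Step 3: Lower bounds on the diagonal terms.} The goal is a lower bound of the form
\[
H(V,V)\ \ge\ \int |\Delta V|^2-a\,\frac{|\nabla V|^2}{|x|^2}-b\,\frac{|V|^2}{|x|^4},
\]
with $a,b$ explicit polynomials in $q,m$, obtained after inserting the value of $\sin^2\alpha$. I then apply three inequalities, valid for $m\ge5$:
\begin{itemize}
\item the Rellich inequality $\int|\Delta V|^2\ge\frac{m^2(m-4)^2}{16}\int|V|^2|x|^{-4}$;
\item the weighted Hardy inequality $\int|\nabla V|^2|x|^{-2}\ge\frac{(m-4)^2}{4}\int|V|^2|x|^{-4}$;
\item the sharper inequality $\int|\Delta V|^2\ge\frac{m^2}{4}\int|\nabla V|^2|x|^{-2}$.
\end{itemize}
To absorb the gradient term, I split $\int|\Delta V|^2=\theta\int|\Delta V|^2+(1-\theta)\int|\Delta V|^2$ and use the first piece against $a\int|\nabla V|^2|x|^{-2}$ and the second against the zeroth-order term. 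Strict positivity then reduces to an algebraic inequality between $m^2(m-4)^2/16$, $a(m-4)^2/4$ and $b$. After simplification this should become a quadratic condition in $m$, of the form
\[
(m-6q-12)^2>4(12q^2+30q+12),
\]
whose larger root gives exactly $m>2(\sqrt{12q^2+30q+12}+3q+6)$. Strictness gives $H(V,V)\ge c\int|V|^2|x|^{-4}>0$ for $V\ne0$. Finally, a density argument extends this from $C_c^\infty(B^m\setminus\{0\})$ to variations in $W^{2,2}_0(B^m)$, which is possible since $m\ge5$.

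\textbf{Main obstacle.} I expect the hardest part to be Step~2 together with the bookkeeping in Step~3. The quantities $\langle\nabla\nu,\nabla V\rangle^2$ and the mixed terms between $f$ and $w$ are not controlled pointwise by $|\nabla\nu|^2|\nabla V|^2$ with a sharp constant. The crude Cauchy--Schwarz bound $\langle\nabla\nu,\nabla V\rangle^2\le|\nabla\nu|^2|\nabla V|^2$ is what I would try first. The specific constants $12q^2+30q+12$ suggest that this crude bound, followed by the Hardy/Rellich absorption above, is indeed what produces the stated threshold, which is presumably not sharp.
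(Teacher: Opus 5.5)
The paper does not prove this statement; it only quotes it from \cite{BS25bistable}, so I judge your plan on its own merits. Your route (ambient second variation, Cauchy--Schwarz on the mixed term, Hardy--Rellich) is the right one. The gap is at the point that produces the threshold: you never compute $a$ and $b$. The claim that the final condition ``should become'' $(m-6q-12)^2>4(12q^2+30q+12)$ is read off the target rather than derived. Your own criterion $\frac{(m-4)^2}{4}\bigl(\frac{m^2}{4}-a\bigr)>b$ is quartic in $m$ unless $b=0$, and nothing in your plan explains why $b$ should vanish.

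The missing idea is that $b$ does vanish. For $\langle V,Q\rangle=0$ and $u_t=(Q+tV)/|Q+tV|$ one has $\ddot u_0=-|V|^2Q$, and the second variation is
\begin{align*}
H(V,V)=\int|\Delta V|^2-2|\nabla Q|^2|\nabla V|^2-4\langle\nabla Q,\nabla V\rangle^2+\big(2|\nabla Q|^4-\langle\Delta^2Q,Q\rangle\big)|V|^2 .
\end{align*}
We have $\Delta^2\nu^{(q)}=\lambda(q+2)(q+m-4)|x|^{-4}\nu^{(q)}$ and $(q+2)(q+m-4)=\lambda+2m-8$. Hence the zeroth-order coefficient equals $\sin^2\alpha\,\lambda|x|^{-4}\bigl(2\sin^2\alpha\,\lambda-(\lambda+2m-8)\bigr)$. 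This is identically zero by \eqref{eq:constraint-biharmonic}, which is just the last component of the biharmonic equation. So $b=0$, no $\langle V,\Delta Q\rangle$-type terms appear, and no decomposition of $V$ is needed.

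We have $|\nabla Q|^2=\frac{\lambda+2m-8}{2|x|^2}$, and the global bound $\langle\nabla Q,\nabla V\rangle^2\le|\nabla Q|^2|\nabla V|^2$ gives $a=3(\lambda+2m-8)$. The single inequality $\int|\Delta V|^2\ge\frac{m^2}{4}\int|\nabla V|^2|x|^{-2}$, valid for $m\ge5$, then yields
\begin{align*}
H(V,V)\ge\Big(1-\frac{12(\lambda+2m-8)}{m^2}\Big)\int|\Delta V|^2 .
\end{align*}
The bracket is positive iff $m^2-12(q+2)m-12q^2+24q+96>0$, i.e. $(m-6q-12)^2>4(12q^2+30q+12)$. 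For $m\ge5$ the lower branch is empty, so this is exactly the stated threshold.

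The Rellich and weighted Hardy inequalities are only needed if you want positivity expressed through $\int|V|^2|x|^{-4}$, and your density argument is fine. Your Step~2, splitting $V=fE+W$ and estimating cross terms against diagonal terms with weights, is superfluous. Unless it collapses back to the single global Cauchy--Schwarz above, it loses constants and would not reproduce this threshold.
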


Another higher order variational problem for maps between Riemannian manifolds is given by the theory of triharmonic maps. Here, the starting point is the trienergy which is defined as follows
\begin{align}
\label{eq:trienergy}
    E_3(\phi)=\int_M|\bar\nabla\tau(\phi)|^2\dv.
\end{align}
The critical points of \eqref{eq:trienergy} are characterized
by the vanishing of the tritension field
\begin{align*}
    0=\tau_3(\phi):=\bar\Delta^2\tau(\phi)-
    \sum_{j=1}^m R^N(\bar\nabla_{e_j}\tau(\phi),\tau(\phi))d\phi(e_j)
-\sum_{j=1}^m R^N(\bar\Delta\tau(\phi),d\phi(e_j))d\phi(e_j),
\end{align*}
where \(\{e_j\}_{1\leq j\leq m}\) represents a local orthonormal basis of \(TM\).
For maps \(u\colon M\to\s^n\subset\R^{n+1}\) the tritension field can be simplified as well, see \cite[Prop. 4.2]{BS25tri} for the precise details.
Using Theorem\,\ref{thm:main} we are also able to construct  triharmonic maps to spheres based on \cite[Theorem 4.5]{BS25tri}, i.e.

\begin{Satz}
\label{thm:triharmonic-main}
Let $m,q\in\mathbb{N}$ and \(\nu^{(q)}\) be given as in Theorem\,\ref{thm:main}.
The map
\(Q:\mathbb{R}^{m}\setminus\{0\}\rightarrow \s^{n}\)
given by
\begin{align*}
Q:=\big(\sin\gamma\cdot \nu^{(q)},\cos\gamma\big),   
\qquad\gamma\in(0,\pi/2),
\end{align*}
is a proper triharmonic map if and only if the equation
\begin{align}
\label{eq:tri-rotation}
&3\sin^4\gamma\,q ^3(m+q -2)^3-2\sin^2\gamma\,q ^2(m+q -2)^2[4+(m+q -6)(4+q )+q (m+q -2)] \\
\nonumber&+q (q +2)(q +4)(m+q -2)(m+q -4)(m+q -6)=0
\end{align}
is satisfied.
\end{Satz}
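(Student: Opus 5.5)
The plan is to deduce the statement from the corresponding computation in \cite[Theorem 4.5]{BS25tri}. That computation never uses the explicit form of Nakauchi's map $u^{(\ell)}$, only a short list of structural properties, and all of them are provided by Theorem~\ref{thm:main}. So I would first isolate exactly what \cite[Theorem 4.5]{BS25tri} uses. Applying Theorem~\ref{thm:main} on $\R^m$ (that is, with $m+1$ replaced by $m$), the map $\nu:=\nu^{(q)}$ satisfies:
\begin{enumerate}
\item $|\nu|^2=1$;
\item $\nu$ is homogeneous of degree zero, so $x\cdot\nabla\nu=0$; this holds because $\nu$ is a polynomial in $y_i=x_i/\norm{x}$;
\item $|\nabla\nu|^2=\lambda\norm{x}^{-2}$ with $\lambda:=q(q+m-2)$;
\item $\Delta\nu=-\lambda\norm{x}^{-2}\nu$.
\end{enumerate}

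Next I would carry out the computation for $Q=(\sin\gamma\,\nu,\cos\gamma)$ directly, using the sphere-valued form of the tritension field from \cite[Prop. 4.2]{BS25tri}. From (1)--(4) one gets
\[
|\nabla Q|^2=\lambda\sin^2\gamma\,\norm{x}^{-2},\qquad \Delta Q=-\lambda\sin\gamma\,\norm{x}^{-2}(\nu,0),
\]
and therefore
\[
\tau(Q)=\lambda\sin\gamma\cos\gamma\,\norm{x}^{-2}\,(-\cos\gamma\,\nu,\ \sin\gamma).
\]
Every further derivative falls on an expression of the form $f(\norm{x})\nu$ or $f(\norm{x})e_{n+1}$. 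For such expressions I would use
\[
\Delta\big(\norm{x}^{-k}\nu\big)=\big(k(k+2-m)-\lambda\big)\norm{x}^{-k-2}\nu .
\]
Here the mixed term $2\nabla\norm{x}^{-k}\cdot\nabla\nu$ vanishes by (2). The remaining inner products arise from pairing $\nabla\nu$ with derivatives of radial multiples of $\nu$; by (1)--(4) together with the identities $\langle\nu,\partial_i\nu\rangle=0$ and $\langle\nu,\Delta\nu\rangle=-|\nabla\nu|^2$, these reduce to explicit powers of $\norm{x}$. Consequently $\tau_3(Q)=\norm{x}^{-6}\,C(\lambda,m,q,\sin^2\gamma)\,(\text{fixed vector})$. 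The $e_{n+1}$-component and the $\nu$-component must be proportional, since $\tau_3(Q)$ is tangent to the sphere along $Q$.

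The main obstacle is the bookkeeping in the curvature terms and in $\bar\Delta^2\tau$, in particular the terms of the form $\langle\nabla_{e_j}\tau,\tau\rangle\,dQ(e_j)$. For these one must check that no non-radial quantity survives, such as the full matrix $\langle\partial_i\nu,\partial_j\nu\rangle$ rather than its trace. I would handle this by observing that each such term is either traced, and then governed by (3), or contracted with $x$, and then killed by (2). Once this is done, the coefficient $C$ coincides with the one obtained in \cite[Theorem 4.5]{BS25tri}, because the same identities (1)--(4) enter. Setting $C=0$ and substituting $\lambda=q(q+m-2)$ yields \eqref{eq:tri-rotation}.

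Finally, properness is automatic: for $\gamma\in(0,\pi/2)$ and $\lambda>0$ the formula above shows $\tau(Q)\neq0$, so $Q$ is never harmonic. This gives the claimed equivalence.
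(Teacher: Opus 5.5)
Your proposal is correct and follows the paper's route. The paper obtains this statement by importing \cite[Theorem 4.5]{BS25tri}, whose computation uses only that $\nu^{(q)}$ is sphere-valued, homogeneous of degree zero and harmonic with energy density $\lambda\norm{x}^{-2}$, where $\lambda=q(q+m-2)$ on $\R^m$ as in Theorem~\ref{thm:main2}; your formula for $\Delta(\norm{x}^{-k}\nu)$ is exactly the substitute for Lemma~\ref{lem:delta-nak} that this transfer needs.

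Carrying out your bookkeeping with $\tau_3=\bar\Delta^2\tau+\sum_j R(\bar\Delta\tau,dQ(e_j))dQ(e_j)-\sum_j R(\bar\nabla_{e_j}\tau,\tau)dQ(e_j)$ gives $\tau_3(Q)=\sin\gamma\cos\gamma\,\norm{x}^{-6}P\,(-\cos\gamma\,\nu,\sin\gamma)$, with $P$ the left-hand side of \eqref{eq:tri-rotation}, so your reduction closes. Note the plus sign on the middle term: that is the correct sign for the analyst's Laplacian, whereas the display in the introduction has a minus there, which would give the wrong polynomial.
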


A careful analysis shows that \eqref{eq:tri-rotation} can always be solved for \(m>2\), see
\cite[Corollary 4.7]{BS25tri} for the precise details.

A variational problem of arbitrary order for maps to the sphere
is represented by extrinsic polyharmonic maps. To recall their definition
let $u:M\rightarrow \s^{n}$ be a map and $\iota:\s^{n}\rightarrow\mathbb{R}^{n+1}$ be the canonical embedding. 
We will deal with maps in the Sobolev space
\begin{align*}
W^{k,2}(M,\s^{n})=\{ u\in W^{k,2}(M,\mathbb{R}^{n+1})\,\colon\, u(x)=(u_1(x),\dots, u_{n+1}(x))\in\s^{n}\,\mbox{almost everywhere}\},   
\end{align*}
where $k\in\mathbb{N}$ and $(u_1,\dots,u_{n+1})$
denote the component functions of $\iota\circ u$.

The \textit{extrinsic $k$-energy functional} for $u\in W^{k,2}(M,\s^{n})$ is defined by
\begin{align}
\label{poly-energies}
E_k^{ext}(u)&=\frac{1}{2}\int_{M}|\Delta^su|^2\dv,\qquad\mbox{when}\,\, k=2s, s\in\mathbb{N};\\
\nonumber E_k^{ext}(u)&=\frac{1}{2}\int_M|\nabla\Delta^su|^2\dv,\qquad\mbox{when}\,\,  k=2s+1,  s\in\mathbb{N}_0.
\end{align}
The critical points of the extrinsic $k$-energy functional are referred to as \textit{extrinsic $k$-harmonic maps} or \textit{extrinsic polyharmonic maps} of order \(k\) and are characterized as solutions of
\begin{align*}
    \Delta^ku-\langle\Delta^ku,u\rangle u=0.
\end{align*}

An important class of critical points of the extrinsic $k$-energy functional is made up of the so-called \textit{minimizers (of the extrinsic $k$-energy functional)}.
More precisely, a map $u\in W^{k,2}(B^m,\s^{n})$ is called minimizer if $E_k^{ext}(u)\leq E_k^{ext}(v)$ for all $v\in W^{k,2}(B^m,\s^{n})$ with $u-v\in W_0^{k,2}(B^m,\mathbb{R}^{n+1})$.
Furthermore, an extrinsic $k$-harmonic map is called \textit{stable} if the second variation of \eqref{poly-energies}
is positive, i.e.
\begin{align*}
    \frac{d^2}{dt^2}E_k^{ext}(u_t)\lvert_{t=0}\geq 0
\end{align*}
for all variations $u_t$ with $u_t-u\in W_0^{k,2}(B^m,\mathbb{R}^{n+1})$. Otherwise, the map $u$ is referred to as \textit{unstable}. 

For the standard radial projection a stability analysis was carried out by Fardoun, Montaldo and Ratto in \cite{MR4436204}. The stability of the generalized equator map (\ref{equator-generalized})
considered as an extrinsic polyharmonic map
was investigated in
\cite{BS25kstable} by the authors and can be extended to the maps constructed in Theorem\,\ref{thm:main}.
We now recall the main results that were obtained on this topic:

\begin{Thm}
\label{thm:exst}
Assume \(m\geq 2k+1\), where $m,k\in\mathbb{N}$. 
There exists a polynomial \(Q_k^q(m)\) such that the generalized equator map \(\nu_\star^{(q)}\colon B^m\to\s^{n}\) is
\begin{enumerate}
    \item an energy minimizing extrinsic \(k\)-harmonic map if \(Q_k^q(m)\geq 0\),
    \item an unstable extrinsic \(k\)-harmonic map if \(Q_k^q(m)<0\).
\end{enumerate}
\end{Thm}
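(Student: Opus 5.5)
The plan is to reduce everything to an eigenvalue computation for $\nu^{(q)}$ combined with a sharp Rellich--Hardy inequality, following \cite{MR4436204} and \cite{BS25kstable}. The argument there for Nakauchi's map should use only the properties of $\nu^{(q)}$ listed in Theorem \ref{thm:main}, together with its structure: each component of $\nu^{(q)}$ has the form $H(x)/\norm{x}^{q}$ with $H\in\mathcal{H}_q^{\R^m}$, since $D$ maps into $(\mathcal{H}_p)^\star\otimes\mathcal{H}_q$.

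\textbf{Step 1: the iterated Laplacian.} For $f=r^{a}H(x/r)$ with $H$ a spherical harmonic of order $q$, the Laplacian is
\[
\Delta f=\big(a(a+m-2)-q(q+m-2)\big)\,r^{a-2}H(x/r).
\]
Iterating from $a=0$ gives $\Delta^s\nu^{(q)}=c_s\,r^{-2s}\nu^{(q)}$, with the explicit product
\[
c_s=\prod_{j=0}^{s-1}\big(2j(2j+m-2)-q(q+m-2)\big).
\]
It follows that $\Delta^k\nu_\star^{(q)}=\lambda_k r^{-2k}\nu_\star^{(q)}$, with $\lambda_k$ obtained from $c_k$. Hence $\Delta^k u$ is parallel to $u$, and $\nu_\star^{(q)}$ is extrinsic $k$-harmonic on $B^m\setminus\{0\}$. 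The assumption $m\ge 2k+1$ gives $\nu_\star^{(q)}\in W^{k,2}$. Since points have zero $W^{k,2}$-capacity in this range, the equation then holds weakly on all of $B^m$. For odd order $k=2s+1$ the same computation applies to $\nabla\Delta^s$.

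\textbf{Step 2: minimality.} Let $v$ be a competitor with the same boundary data and set $w=v-u$, where $u=\nu_\star^{(q)}$. Expanding the quadratic energy and integrating by parts (allowed since $w\in W_0^{k,2}$) gives
\[
E_k^{ext}(v)-E_k^{ext}(u)=E_k^{ext}(w)+(-1)^k\int\langle\Delta^k u,w\rangle .
\]
Because $|u|=|v|=1$, we have $\langle u,w\rangle=-\tfrac12|w|^2$. Therefore
\[
E_k^{ext}(v)-E_k^{ext}(u)=\tfrac12\Big(\int|\nabla^k_\Delta w|^2-(-1)^k\lambda_k\int r^{-2k}|w|^2\Big),
\]
where $\nabla^k_\Delta$ denotes $\Delta^{k/2}$ or $\nabla\Delta^{(k-1)/2}$ according to parity. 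The sharp higher-order Rellich--Hardy inequality states $\int|\nabla^k_\Delta w|^2\ge H_k(m)\int r^{-2k}|w|^2$, with the explicit optimal constant
\[
H_k(m)=\prod_{j}\Big(\tfrac{m-4j-2}{2}\cdot\tfrac{m+4j-2}{2}\Big)^2
\]
(or its odd-order variant). This inequality is valid exactly for $m\ge 2k+1$. We then define
\[
Q_k^q(m):=H_k(m)-(-1)^k\lambda_k ,
\]
a polynomial in $m$ after clearing denominators, and conclude that $Q_k^q(m)\ge0$ implies $u$ is minimizing.

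\textbf{Step 3: instability.} If $Q_k^q(m)<0$, I will use the free last coordinate. Consider the variations $u_t=(\cos(t f)\,\nu^{(q)},\sin(tf))$ with $f\in C_c^\infty(B^m)$. The second variation reduces, after using Step 1 and $|\nu^{(q)}|=1$, to the quadratic form
\[
\int|\nabla^k_\Delta f|^2-(-1)^k\lambda_k\int r^{-2k}f^2 .
\]
This requires checking that the cross terms from $\nabla(\cos(tf)\nu^{(q)})$ combine exactly into the $\lambda_k r^{-2k}f^2$ term. I expect this check to be the main obstacle: for general $k$ it needs a Leibniz-type bookkeeping of $\Delta^s(f\nu^{(q)})$ using $\langle\nu,\partial\nu\rangle=0$ and the homogeneity of $\nu$. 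I would first carry it out in the second-variation formula $\int|\nabla^k_\Delta\xi|^2-\langle\Delta^k u,u\rangle|\xi|^2$ for tangent fields $\xi=f e_{m'+1}$, where it is immediate. Since $H_k(m)$ is optimal, radial functions $f_\epsilon\approx r^{-(m-2k)/2+\epsilon}$, cut off near $0$ and near $\partial B^m$, make the Rellich quotient arbitrarily close to $H_k(m)$. With these test functions the form is negative, which proves instability.
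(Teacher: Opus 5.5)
Your proposal is correct and is essentially the argument the paper relies on; the paper defers to \cite{BS25kstable}, which follows \cite{MR4436204}. Because every component of $\nu^{(q)}$ has the form $h/\norm{x}^{q}$ with $h\in\mathcal{H}_q$, one gets $\Delta^k\nu^{(q)}=c_k\norm{x}^{-2k}\nu^{(q)}$ as in Lemma \ref{lem:delta-nak}. The expansion of $E_k^{ext}(u+w)-E_k^{ext}(u)$ with $\langle u,w\rangle=-\tfrac12|w|^2$ is then compared against the sharp Rellich constant, and Rellich near-optimizers placed in the free last coordinate give instability.

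Your explicit constants are wrong as written and should be fixed, although the existence of $Q_k^q$ is unaffected. Iterating from $a=0$ means $a=-2j$, so $c_k=\prod_{j=0}^{k-1}\big(-(q+2j)(q+m-2-2j)\big)$, with $(-1)^kc_k>0$. Your sign would give $(-1)^2c_2=-(m^2-1)<0$ for $q=1$, which would make $Q$ positive for every $m$. The sharp Rellich constant for $m>2k$ is $H_k(m)=\prod_{i=0}^{k-1}\big(\tfrac{m-2k+4i}{2}\big)^2$; for $k=2$ this is $m^2(m-4)^2/16$, whereas your formula gives $(m-2)^4/16$, which exceeds the sharp constant. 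Finally, the ``obstacle'' in Step 3 disappears once you expand $u_t=u+t(0,f)-\tfrac{t^2}{2}(f^2\nu^{(q)},0)+O(t^3)$. This gives your quadratic form directly, with no Leibniz bookkeeping.
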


The previous theorem allows to prove the existence of energy minimizing extrinsic $k$-harmonic maps:

\begin{Thm}
\label{thm:st2}
For any $k\geq 2$ and $q\in\mathbb{N}$ we have $Q_k^q(m)>0$ for any $m\geq 5+3(k-2)+5q$. If this inequality is satisfied, the generalized equator map $\nu_\star^{q}:B^m\rightarrow\s^{n}$ is an energy minimizing extrinsic $k$-harmonic map.
\end{Thm}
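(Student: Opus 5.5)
The plan is to make the polynomial \(Q_k^q(m)\) from Theorem~\ref{thm:exst} completely explicit and then prove its positivity by comparing it factor by factor. The second statement, that \(\nu^{(q)}_\star\) is energy minimizing, then follows at once from Theorem~\ref{thm:exst}(1), so everything reduces to the inequality \(Q_k^q(m)>0\).

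\emph{Step 1: the coefficient \(\lambda_k\).} By Theorem~\ref{thm:main}, \(\nu^{(q)}=\norm{x}^{-q}\Psi\), where every component of \(\Psi\) is a homogeneous harmonic polynomial of degree \(q\). The identity
\begin{align*}
\Delta\big(\norm{x}^{a}\Psi\big)=\big(a(a+m-2)+2aq\big)\norm{x}^{a-2}\Psi
\end{align*}
will be iterated \(k\) times, starting from \(a=-q\). This gives
\begin{align*}
\Delta^k\nu^{(q)}=\lambda_k(q,m)\,\frac{\nu^{(q)}}{\norm{x}^{2k}},\qquad
\lambda_k(q,m)=\prod_{j=0}^{k-1}\big(4j^2+4jq-2(m-2)(q+j)\big).
\end{align*}

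\emph{Step 2: the form of \(Q_k^q(m)\).} In the argument behind Theorem~\ref{thm:exst}, following Fardoun--Montaldo--Ratto \cite{MR4436204}, the second variation in a tangent direction \(v\) is controlled by a \(k\)-th order Hardy--Rellich inequality, roughly
\begin{align*}
\int|\nabla^k v|^2\ge C_k(m)\int\frac{|v|^2}{\norm{x}^{2k}},
\end{align*}
with \(C_k(m)\) a product of squared linear factors of the type \(\big((m-2-4i)/2\big)^2\) or \(\big((m-4i)(m-4i-4)/4\big)^2\), depending on the parity of \(k\). Against this stands the potential term coming from \(\langle\Delta^k u,u\rangle=\lambda_k/\norm{x}^{2k}\) together with the lower-order terms of the polarisation. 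I expect \(Q_k^q(m)\) to be, up to a positive normalisation, \(C_k(m)-|\lambda_k(q,m)|\) plus explicit corrections. I would read off the exact expression from the proof of Theorem~\ref{thm:exst} and fix this normalisation first.

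\emph{Step 3: factor-by-factor comparison.} Both \(C_k(m)\) and \(|\lambda_k|\) are products of \(k\) factors indexed by \(j\). So I would show that, for each \(j\), the \(j\)-th Hardy--Rellich factor dominates the \(j\)-th factor
\begin{align*}
2(m-2)(q+j)-4j(j+q)
\end{align*}
whenever \(m\ge 5+3(k-2)+5q\). Each such inequality is quadratic in \(m\) with leading coefficient \(1/4\) on the left and linear on the right. Positivity therefore holds beyond the larger root, which is roughly \(m\approx 8(q+j)+O(j)\) in the worst case. This is where the linear threshold in \(k\) and \(q\) must come out, and I would check that the stated bound exceeds the largest root uniformly in \(j\le k-1\). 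If a purely termwise comparison is too lossy at \(j=0\), I would pair the \(j=0\) factor with the top factor \(j=k-1\), or use an induction on \(k\) via \(Q_{k+1}^q\) versus \(Q_k^q\). Adding \(3\) to the threshold per step in \(k\) is exactly what such an induction should reflect.

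\emph{Main obstacle.} The main difficulty is Step~3 at small \(k\) (\(k=2,3\)) and at the boundary value of \(m\). There the crude factorwise bounds may fail, and the corrections in \(Q_k^q\) beyond \(C_k-|\lambda_k|\) matter. For these base cases I would expand \(Q_k^q(m)\) as a polynomial in \(m\) after substituting \(m=5+3(k-2)+5q+t\) with \(t\ge0\), and verify that all coefficients in \(t\) and \(q\) are nonnegative. I would then run the induction in \(k\) from there.
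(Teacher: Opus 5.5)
You correctly reduce the minimizing claim to Theorem~\ref{thm:exst}(1). However, the inequality $Q_k^q(m)>0$, which is the whole content of the statement, is never proved: Step~2 leaves $Q_k^q$ undetermined, Step~1 is miscomputed, and Step~3 fails as set up.

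Iterating $\Delta(\norm{x}^{a}\Psi)=a(a+m-2+2q)\norm{x}^{a-2}\Psi$ from $a=-q$ runs through $a=-q-2j$. This gives $\lambda_k=\prod_{j=1}^k(2j+q-2)(2j-q-m)$, which is Lemma~\ref{lem:delta-nak} with $\ell=q$. Your factors are those for $a=-2(q+j)$; already for $k=q=1$ you get $-2(m-2)$ instead of $-(m-1)$.

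There are also no ``explicit corrections''. For $|v|=1$ and $w=v-\nu^{(q)}_\star\in W^{k,2}_0$ one has $\langle\nu^{(q)}_\star,w\rangle=-\frac12|w|^2$, hence exactly
\[
E_k^{ext}(v)-E_k^{ext}(\nu^{(q)}_\star)=\frac12\int|D^kw|^2-\frac{\mu_k}{2}\int\frac{|w|^2}{\norm{x}^{2k}},\qquad \mu_k=\prod_{i=0}^{k-1}M_i,\quad M_i=(q+2i)(m+q-2-2i),
\]
where $D^kw$ is $\Delta^sw$ or $\nabla\Delta^sw$ according to the parity of $k$. The same quadratic form is the second variation along $(\cos(t\phi)\,\nu^{(q)},\sin(t\phi))$. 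So, up to a positive factor, $Q_k^q(m)=\prod_{i=0}^{k-1}R_i-\mu_k$, with the sharp Rellich factors $R_i=\frac{(m-2k+4i)^2}{4}$.

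Your factor-by-factor comparison is false at the threshold, even with these correct factors. For $k=4$, $q=1$, $m=16=5+3(k-2)+5q$ one gets $R_i=16,36,64,100$ and $M_i=15,39,55,63$. Since $36<39$, no matching of factors works, although $\prod R_i=3686400>2027025=\prod M_i$. With your incorrect factors it is worse: your own estimate $m\approx 8q$ at $j=0$ exceeds the claimed bound $5q+5$ already for $k=2$, $q\ge 2$.

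The pairing you mention only in passing does work if you carry it out for all $i\leftrightarrow k-1-i$. Put $u=m-2$, $P=q+k-1$, $U=m+q-k-1$, $s=k-1-2i$. Then $R_iR_{k-1-i}=\frac{(u^2-4s^2)^2}{16}$ and $M_iM_{k-1-i}=(P^2-s^2)(U^2-s^2)$, so the $s^4$ terms cancel:
\[
16\big(R_iR_{k-1-i}-M_iM_{k-1-i}\big)=(u^4-16P^2U^2)+8s^2(2P^2+2U^2-u^2).
\]
At $m=5+3(k-2)+5q$ one checks $u^2-4PU=(k-1-q)^2\ge 0$ and $2P^2+2U^2-u^2>0$, and both increase with $m$ beyond that. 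For odd $k$ the middle index is the case $s=0$. Strict positivity comes from the pair $i=0$, where $s=k-1\ge1$.

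The paper takes neither route. It quotes Theorem~\ref{thm:st2} from \cite{BS25kstable}, where it is proved for Nakauchi's $u^{(q)}$, and transfers it using only two facts: $|\nu^{(q)}|=1$, and $\norm{x}^q\nu^{(q)}$ has homogeneous harmonic components of degree $q$ (Theorem~\ref{thm:operators}). Hence $\Delta^k\nu^{(q)}$ obeys the same formula, and $Q_k^q(m)$, which depends only on $(k,q,m)$, is literally the same polynomial. Your proposal needs one of these two completions.
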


Eventually, for \(q=m\) the stability analysis yields the following instability result:

\begin{Thm}
\label{thm:st3}
 Assume that \(q=m\geq 2k+1\). Then, the generalized equator map \(\nu_\star^{(q)}\colon B^m\to\s^{n}\) is unstable. 
\end{Thm}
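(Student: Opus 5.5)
The plan is to use Theorem~\ref{thm:exst} and show that $Q_k^q(m)<0$ whenever $q=m\geq 2k+1$. Since the paper does not display $Q_k^q(m)$, I will first reconstruct how it arises: it compares a Hardy--Rellich constant with the coefficient of the zeroth order term in the second variation. I will therefore argue directly with the second variation. This also makes the instability claim self-contained.

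\emph{Second variation.} Since the last component of $\nu_\star^{(q)}$ vanishes, $e_{n+1}$ is tangent to $\s^n$ along the image. Hence for $f\in C_c^\infty(B^m\setminus\{0\})$ the vector field $\xi=f\,e_{n+1}$ is an admissible variation, for instance via $u_t=(\cos(tf)\,\nu^{(q)},\sin(tf))$. For such variations the second variation of $E_k^{ext}$ reduces to
\begin{align*}
\int_{B^m}|\nabla^{k}_{\ast} f|^2\dv-\int_{B^m}(-1)^k\langle\Delta^k\nu_\star^{(q)},\nu_\star^{(q)}\rangle f^2\dv,
\end{align*}
where $\nabla^k_\ast$ stands for $\Delta^{k/2}$ or $\nabla\Delta^{(k-1)/2}$ according to parity. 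The right-hand side is computed from the homogeneity structure in Theorem~\ref{thm:main}. Each component of $\nu^{(q)}$ has the form $r^{0}Y$ with $Y$ a spherical harmonic of order $q$. Writing $\lambda=q(q+m-2)$, the identity $\Delta(r^aY)=(a(a+m-2)-\lambda)r^{a-2}Y$ iterated with $a=0,-2,\dots,-2k+2$ gives
\begin{align*}
(-1)^k\Delta^k\nu^{(q)}=c_k\,r^{-2k}\nu^{(q)},\qquad c_k=\prod_{j=0}^{k-1}\big(\lambda+2j(m-2-2j)\big).
\end{align*}
Hence $(-1)^k\langle\Delta^k\nu_\star^{(q)},\nu_\star^{(q)}\rangle=c_k r^{-2k}$.

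\emph{Comparison with the Rellich constant.} The sharp Hardy--Rellich inequality $\int|\nabla_\ast^k f|^2\geq H_k(m)\int f^2 r^{-2k}$ on $C_c^\infty(B^m\setminus\{0\})$ has
\begin{align*}
H_k(m)=\prod_{j=0}^{k-1}\frac{(m-2k+4j)(m+2k-4j-4)}{4},
\end{align*}
and it is valid for $m\geq 2k+1$. I expect $Q_k^q(m)$ to be (a positive multiple of) $H_k(m)-c_k$.

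For $q=m$ we have $\lambda=2m(m-1)$. For $0\leq j\leq k-1$ and $m\geq 2k+1$ we get $m-2-2j\geq 0$, so each factor of $c_k$ is at least $2m(m-1)$. Each factor of $H_k(m)$ is a product $ab/4$ with $a+b=2m-4$, hence at most $(m-2)^2/4<2m(m-1)$. Therefore $c_k>H_k(m)$.

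\emph{Negative direction.} Since $H_k(m)$ is sharp, there exist $f\in C_c^\infty(B^m\setminus\{0\})$ with Rellich quotient below $c_k$. These are the standard truncations of $r^{-(m-2k)/2}$ with a cutoff near $0$ and near $\partial B^m$. For such $f$ the second variation is negative, so $\nu_\star^{(q)}$ is unstable.

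The main obstacle is to make sure that the reduction of the second variation to the scalar quadratic form above is exact. One must check that no cross terms arise from derivatives of $\nu^{(q)}$ in the normal-direction correction. These terms vanish because $\xi\perp\nu_\star^{(q)}$ identically and $\xi$ lies in a fixed direction. For odd $k$ one also needs the gradient form of the Rellich constant. If needed, I would fall back on the explicit formula for $Q_k^q(m)$ in \cite{BS25kstable} and substitute $q=m$, using the same factorwise comparison.
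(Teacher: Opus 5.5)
Your argument is correct. In substance it is the instability half of Theorem~\ref{thm:exst} specialised to $q=m$, but carried out from scratch. The paper gives no proof of this statement: it quotes it from \cite{BS25kstable}, where it was established for Nakauchi's $u^{(\ell)}$, and asserts that it carries over to every $\nu^{(q)}$ of Theorem~\ref{thm:main}, with the sign of the undisplayed polynomial $Q_k^q(m)$ at $q=m$ doing the work.

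You bypass $Q_k^q(m)$ altogether, and the ingredients check out. The second variation along $u_t=(\cos(tf)\nu^{(q)},\sin(tf))$ is exactly $\int|\nabla^k_\ast f|^2-c_k\int f^2r^{-2k}$. Your $c_k$ agrees with Lemma~\ref{lem:delta-nak} (with $\ell=q$) after the shift $j\mapsto j+1$, since $(q+2j)(q+m-2-2j)=\lambda+2j(m-2-2j)$. Your $H_k(m)$ is the correct Hardy--Rellich constant in both parities.

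What your route buys:
\begin{itemize}
\item It actually verifies the transfer the paper only claims. You use nothing beyond $|\nu^{(q)}|=1$ and the fact that the components lie in $r^{-q}\mathcal{H}_q$.
\item Your factorwise bound $c_k\geq\lambda^k>\big((m-2)^2/4\big)^k\geq H_k(m)$ proves more than stated. It gives instability whenever $m\geq 2k+1$ and $q(q+m-2)>(m-2)^2/4$, not only for $q=m$.
\item Only the elementary half of sharpness is needed: truncations of $r^{-(m-2k)/2}$, scaled into $B^m$.
\end{itemize}

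The route through Theorem~\ref{thm:exst} instead gives the full dichotomy, including the minimizing side. That side needs the Hardy--Rellich inequality itself, not just test functions.

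One step you leave implicit: the factorwise comparison requires the factors of $H_k(m)$ to be nonnegative. This holds because $m-2k+4j$ and $m+2k-4j-4$ are both $\geq m-2k\geq 1$ for $0\leq j\leq k-1$.
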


We would like to emphasize that all theorems mentioned above, specifically Theorems\,\ref{thm:exst}, \ref{thm:st2}, and \ref{thm:st3}, can be applied to any generalized radial projection or the generalized equator map \eqref{equator-generalized_2}, respectively, as derived from Theorem \ref{thm:main}.

\medskip

\textbf{Notation and Conventions:}
Throughout this article we will employ the following sign conventions: 
For the Riemannian curvature tensor field we use 
$$
R(X,Y)Z=[\nabla_X,\nabla_Y]Z-\nabla_{[X,Y]}Z,
$$ 
where \(X,Y,Z\) are vector fields.

For the rough Laplacian on the pull-back bundle $\phi^{\ast} TN$ we employ the analysts sign convention, i.e.
$$
\bar\Delta = \tr(\bar\nabla\bar\nabla-\bar\nabla_\nabla).
$$
In particular, this implies that the Laplace operator has a negative spectrum.

\medskip

\textbf{Organization:}
We provide preliminaries in Section\,\ref{sec:prelim}.
In Section\,\ref{sec-nak} we reveal the underlying theoretical framework
of the general radial projection introduced by Nakauchi in \cite{MR4371934,MR4593065}. Furthermore, we construct additional new generalized equator maps.

\section{Preliminaries}
\label{sec:prelim}
This section provides the mathematical background for our analysis. In Subsection\,\ref{sub-Nakauchi} we first recall the \lq generalized radial projection\rq\, introduced by Nakauchi in \cite{MR4371934,MR4593065}, and the \lq generalized equator map\rq\, introduced in \cite{BS25kstable}. Subsection\,\ref{sub-eigenmaps} contains a brief review on eigenmaps after which we discuss the 
so-called harmonic projection operator in Subsection\,\ref{sub-hmp}. Finally, Subsection \ref{sub-op} presents a brief exposition on operators acting on eigenmaps.

\subsection{Generalized radial projection and generalized equator map}
\label{sub-Nakauchi}
\subsubsection{Generalized radial projection}
We give a short overview of the generalized radial projection due to Nakauchi \cite{MR4371934,MR4593065},
where we closely follow the presentation in \cite{BS25kstable}.

\smallskip

In \cite[Main Theorem, p.1]{MR4593065} Nakauchi showed that for any $\ell,m\in\mathbb{N}$ with $\ell\leq m$ there exists a harmonic map
\begin{align}
\label{nak-maps}
u^{(\ell)}\colon\mathbb{R}^{m}\setminus\{0\}&\to\s^{m^{\ell}-1},\\
\notag x=(x_1,\dots,x_m)&\mapsto u^{(\ell)}(x)=(u^{(\ell)}_{i_1\dots i_{\ell}}(x))_{1\leq i_1,\dots i_{\ell}\leq m},
\end{align}
defined as follows:
We set \(y_i=\frac{x_i}{\norm x}\) and have the recursive definition
\begin{align}
\label{nak-rec}
u^{(1)}_{i_1}(x)=&y_{i_1},\\
\nonumber u^{(\ell)}_{i_1\ldots i_\ell}(x)=&C_{\ell,m}\big(y_{i_1}u^{(\ell-1)}_{i_1\ldots i_{\ell-1}}(x)-\frac{1}{\ell+m-3}\norm x\frac{\partial}{\partial x_{i_\ell}}u^{(\ell-1)}_{i_1\ldots i_{\ell-1}}(x)\big),
\end{align}
where 
\begin{align*}
 C_{\ell,m}=\sqrt{\frac{\ell+m-3}{2\ell+m-4}}.   
\end{align*}
The map $u^{(\ell)}$ is henceforth referred to as \textit{generalized radial projection}.

\subsubsection{Identities for the generalized radial projection}
We will now present some identities of the generalized radial projection. Let $\ell,m\in\mathbb{N}$ such that $\ell\leq m$.
Nakauchi \cite{MR4593065} proved that the map $u^{(\ell)}\colon\mathbb{R}^{m}\setminus\{0\}\to\s^{m^\ell-1}$ satisfies: 
\begin{enumerate}
\item $u^{(\ell)}$ solves the equation for harmonic maps to spheres
\begin{align*}
    \Delta u^{(\ell)}+\norm{\nabla u^{(\ell)}}^2 u^{(\ell)}=0; 
\end{align*}
\item $u^{(\ell)}$ is a polynomial in $u_{i_1},\dots, u_{i_{\ell}}$ of degree $\ell$, where $u_{i_j}=\frac{x_{i_j}}{\norm x}$;
\item $\norm{\nabla u^{(\ell)}}^2=\frac{\ell(\ell+m-2)}{\norm x^2}$.
\end{enumerate}
Furthermore, Nakauchi \cite[Proposition 1, (1)]{MR4593065} showed that the map $u^{(\ell)}$ satisfies the identity
\begin{align}
\label{ortho}
   \sum_{j=1}^m y_j\cdot\nabla_ju^{(\ell)}=0.
\end{align}

The following Lemma, which is Lemma\,2.3 from \cite{BS25tri}, gives an explicit expression for $\Delta ^{k}u^{(\ell)}$ where $k\in\mathbb{N}$.

\begin{Lem}
\label{lem:delta-nak}
 For each $k\in\mathbb{N}$ the map \(u^{(\ell)}\) defined in (\ref{nak-maps}) satisfies 
 \begin{align*}
          \Delta ^{k}u^{(\ell)}=
          \big(\prod_{j=1}^k(2j+\ell-2)(2j-\ell-m)\big)\frac{u^{(\ell)}}{\norm x^{2 k}}.
    \end{align*} 
\end{Lem}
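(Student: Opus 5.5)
The statement to prove is Lemma~\ref{lem:delta-nak}: $\Delta^k u^{(\ell)}=\prod_{j=1}^k(2j+\ell-2)(2j-\ell-m)\,u^{(\ell)}/\norm{x}^{2k}$. The plan is to prove it by induction on $k$. The key structural input is that $u^{(\ell)}$ has the form $u^{(\ell)}(x)=P(x)/\norm{x}^{\ell}$, where every component of $P$ is a homogeneous polynomial of degree $\ell$ in $x$. This follows from property (2): $u^{(\ell)}$ is a polynomial of degree $\ell$ in the $y_i$.

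First I would check that each component of $P$ is harmonic. Set $|\nabla u^{(\ell)}|^2=\ell(\ell+m-2)/\norm{x}^2$ (property (3)). The harmonic map equation (1) then gives
\[
\Delta u^{(\ell)}=-\ell(\ell+m-2)\,u^{(\ell)}/\norm{x}^2 .
\]
On the other hand, for $P$ homogeneous of degree $\ell$,
\[
\Delta(P r^{-\ell})=r^{-\ell}\Delta P+2\nabla P\cdot\nabla r^{-\ell}+P\,\Delta r^{-\ell}.
\]
Here Euler's identity gives $\nabla P\cdot x=\ell P$. We also have $\Delta r^{s}=s(s+m-2)r^{s-2}$. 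Substituting,
\[
\Delta(P r^{-\ell})=r^{-\ell}\Delta P+\big(-2\ell^2+\ell(\ell-m+2)\big)P r^{-\ell-2}=r^{-\ell}\Delta P-\ell(\ell+m-2)P r^{-\ell-2}.
\]
Comparing with the harmonic map equation forces $\Delta P=0$. This is also exactly the case $k=1$, since $(\ell)(2-\ell-m)=-\ell(\ell+m-2)$.

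For the inductive step I would prove the general identity
\[
\Delta(P r^{s})=s(s+m-2+2\ell)\,P r^{s-2}
\]
for harmonic $P$ homogeneous of degree $\ell$, using the same three-term expansion with $\Delta P=0$. If $\Delta^{k-1}u^{(\ell)}=c_{k-1}P r^{-\ell-2(k-1)}$, then applying this identity with $s=-\ell-2k+2$ gives the factor
\[
(-\ell-2k+2)(-\ell-2k+2+m-2+2\ell)=(2k+\ell-2)(2k-\ell-m)\cdot(-1)(-1)\cdot(\text{sign check}).
\]
Concretely, $(-(2k+\ell-2))\,(m+\ell-2k)=(2k+\ell-2)(2k-\ell-m)$, which matches the $k$-th factor of the product. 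Induction then closes the argument.

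The only real obstacle is establishing the form $P r^{-\ell}$ with $P$ homogeneous of degree exactly $\ell$. Property (2) gives a polynomial in the $y_i$ of degree $\ell$, which a priori may contain lower-degree terms. I would handle this using $\sum y_i^2=1$: multiply each term of degree $\ell-2j$ by $(\sum y_i^2)^j$ to homogenize, which is harmless since degrees of lower terms can be matched this way. Parity is guaranteed by the recursion \eqref{nak-rec}, since $\norm{x}\partial_{x_i}$ applied to a polynomial in $y$ of degree $d$ yields terms of degree $d\pm1$. Alternatively, I would read this directly off the recursion by induction on $\ell$.
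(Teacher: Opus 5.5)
Your proof is correct. The paper does not prove this lemma itself; it quotes it as Lemma~2.3 of \cite{BS25tri}. The ingredients it lists just before the statement (the harmonic map equation, the energy density $\ell(\ell+m-2)/\norm{x}^2$, and the identity \eqref{ortho}) point to a more direct induction. Since $x\cdot\nabla u^{(\ell)}=0$, the cross term in $\Delta\big(u^{(\ell)}\norm{x}^{-2k}\big)$ vanishes, and
\[
\Delta\big(u^{(\ell)}\norm{x}^{-2k}\big)=\big(-\ell(\ell+m-2)+2k(2k+2-m)\big)\frac{u^{(\ell)}}{\norm{x}^{2k+2}}=(2k+\ell)(2k+2-\ell-m)\frac{u^{(\ell)}}{\norm{x}^{2k+2}},
\]
which is exactly the $(k+1)$-st factor.

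Your route instead splits off $P=\norm{x}^{\ell}u^{(\ell)}$. You first show $\Delta P=0$, which is the computation in the paper's proof of Lemma~\ref{lem_har} run in the opposite direction. The paper deduces $\Delta v^{(n)}=0$ from the $k=1$ case of this lemma, whereas you deduce it from properties (1) and (3), so there is no circularity. You then iterate $\Delta(P\norm{x}^{s})=s(s+m-2+2\ell)P\norm{x}^{s-2}$, a generalization of Lemma~\ref{lem:delta}.

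The two computations are equivalent; they differ only in which factor is split off. The direct one is shorter and needs no preliminary step. Yours yields the harmonicity of $v^{(\ell)}$ as a byproduct, and its only input is that $\Phi$ be harmonic and $q$-homogeneous. So it gives the same formula at once for any $\Phi/\norm{x}^{q}$ of that kind, for instance the maps of Theorem~\ref{thm:main2}.

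The step you flag as the main obstacle is not actually needed. The three-term expansion and Euler's identity hold for any smooth $\ell$-homogeneous function on $\R^m\setminus\{0\}$, and $P$ is $\ell$-homogeneous simply because $u^{(\ell)}$ depends only on $x/\norm{x}$ (equivalently, \eqref{ortho}). Polynomiality of $P$, which your parity argument correctly establishes, matters only for extending $P$ across the origin. Also, the displayed line ending in ``(sign check)'' should just be replaced by the identity $(-(2k+\ell-2))(m+\ell-2k)=(2k+\ell-2)(2k-\ell-m)$ that you state right after it.
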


Additional properties of the map \(u^{(\ell)}\) can be found in the article of Nakauchi \cite{MR4593065}. We will not mention them here as they will not be used in this manuscript.

\subsubsection{Generalized equator map}
The \textit{generalized equator map} can be constructed naturally from the generalized radial projection as follows: 
\begin{align} \label{equator-generalized-a} u_\star^{(l)}\colon B^m&\to\s^{m^{\ell}}\subset\mathbb{R}^{m^{\ell}}\times\mathbb{R},\\ \notag x&\mapsto\big(u^{(l)},0\big), \end{align} where \(u^{(l)}\) is specified in \eqref{nak-maps}.
For $\ell=1$ the map \eqref{equator-generalized-a} is the well-known equator map, i.e.
\begin{align}
  u_\star^{(1)}\colon B^m&\to\s^{m}\subset\mathbb{R}^{m}\times\mathbb{R},\\ 
  \notag x&\mapsto\big(\frac{x}{\norm{x}},0\big).
\end{align}    
Note that we consider $u_\star^{(\ell)}$ as a one-parameter family of maps and use the singular to refer to it.
Finally, note that the identities of the generalized radial projection mentioned in the preceding subsection induce identities for the generalized equator map in a natural manner.

\subsection{Eigenmaps}
\label{sub-eigenmaps}
In this subsection we provide some background on eigenmaps.
In Section\,\ref{sec-nak} we then establish a link between eigenmaps and the map introduced by Nakauchi
in \cite{MR4371934,MR4593065}.

\smallskip

We begin by recalling the definition of eigenmaps.

\begin{Dfn}[see e.g. page 77 in \cite{MR2044031}]
\label{def:eigen}
Denote by $\iota:\s^n\rightarrow\R^{n+1}$ the inclusion map.
A smooth map $\varphi:M\rightarrow\s^n$ is called an \textit{eigenmap} if all components of the map $\Phi:=\iota\circ\varphi$ are eigenfunctions of the Laplacian on $M$ with the same eigenvalue.
\end{Dfn}

By the following proposition, a smooth map $\varphi:M\rightarrow\s^n$ 
is an eigenmap if and only if it is harmonic with constant energy density.

\begin{Prop}[Proposition 3.3.17 in \cite{MR2044031}]
 Let the maps $\varphi, \iota$ and $\Phi$ be as in Definition\,\ref{def:eigen}. Then $\varphi$ is harmonic if and only if
 \begin{align*}
     \Delta\Phi=\lambda\Phi,
 \end{align*}
 for some function $\lambda:M\rightarrow\R$. Further, in this case, we have $\lambda=-\lvert d\Phi\rvert^2=-\lvert d\varphi\rvert^2$.
\end{Prop}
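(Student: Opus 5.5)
The plan is to compare the Laplacian of the $\R^{n+1}$-valued map $\Phi=\iota\circ\varphi$ with the tension field of $\varphi$, using the composition formula for tension fields and the explicit second fundamental form of $\s^n\subset\R^{n+1}$. For a composition $\iota\circ\varphi$ we have
\begin{align*}
\tau(\iota\circ\varphi)=d\iota\big(\tau(\varphi)\big)+\tr_g\nabla d\iota\,(d\varphi,d\varphi).
\end{align*}
Since the target $\R^{n+1}$ is flat, $\tau(\iota\circ\varphi)=\Delta\Phi$, the componentwise Laplace--Beltrami operator on $M$. This uses the analysts' sign convention fixed in the Notation paragraph.

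The term $\nabla d\iota$ is the second fundamental form of the unit sphere. The outward unit normal at $y\in\s^n$ is $y$ itself, so $\nabla d\iota(X,Y)=-\langle X,Y\rangle\, y$ for $X,Y$ tangent to the sphere. Inserting this with $y=\Phi(x)$ and tracing over a local orthonormal frame $\{e_j\}$ of $TM$ gives
\begin{align*}
\Delta\Phi=\tau(\varphi)-|d\varphi|^2\,\Phi .
\end{align*}
Here $\tau(\varphi)$ is identified with a vector in $\R^{n+1}$ tangent to $\s^n$ at $\Phi(x)$, i.e.\ orthogonal to $\Phi(x)$.

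With this identity both directions follow. If $\varphi$ is harmonic, then $\Delta\Phi=\lambda\Phi$ with $\lambda=-|d\varphi|^2$. Conversely, suppose $\Delta\Phi=\lambda\Phi$ for some function $\lambda$. Then $\tau(\varphi)=(\lambda+|d\varphi|^2)\Phi$. The left-hand side is tangent to the sphere and the right-hand side is normal to it, so both vanish and $\varphi$ is harmonic.

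For the value of $\lambda$ there is also a direct check. Differentiating $\langle\Phi,\Phi\rangle=1$ along $e_j$ gives $\langle\Phi,\nabla_{e_j}\Phi\rangle=0$. Differentiating once more and summing over $j$ gives $\langle\Phi,\Delta\Phi\rangle+|d\Phi|^2=0$. Taking the inner product of $\Delta\Phi=\lambda\Phi$ with $\Phi$ then yields $\lambda=-|d\Phi|^2$. Finally, $|d\Phi|=|d\varphi|$ because $\iota$ is an isometric embedding.

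The step I expect to need the most care is the sign of the second fundamental form $\nabla d\iota$ relative to the chosen sign of $\Delta$. I would settle it by the one-dimensional test case of a unit-speed great circle $\Phi(t)=(\cos t,\sin t,0,\dots,0)$, which is harmonic and satisfies $\Phi''=-\Phi$. This is consistent with $\lambda=-|d\Phi|^2=-1$.
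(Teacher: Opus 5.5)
Your proof is correct. The composition law $\tau(\iota\circ\varphi)=d\iota(\tau(\varphi))+\tr_g\nabla d\iota(d\varphi,d\varphi)$ with $\nabla d\iota(X,Y)=-\langle X,Y\rangle\, y$ gives $\Delta\Phi=\tau(\varphi)-|d\varphi|^2\Phi$, and splitting into tangential and normal parts yields both directions and $\lambda=-|d\Phi|^2=-|d\varphi|^2$.

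The paper gives no proof of its own; it only cites Baird--Wood. Your argument is the standard one behind that cited result, so there is nothing to add.
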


Eigenmaps do not only play an important role within the theory of harmonic maps, but also have strong connections to other classical objects from differential geometry, see e.g. \cite{MR3969432,MR5063505} as well as the references therein.
Note that most Riemannian manifolds do not admit eigenmaps, since the spectrum of the Laplacian is generically simple, see \cite{MR464332}.

\smallskip

In this manuscript we focus on the case $M$ being a round sphere.
A map $\varphi:\s^m\rightarrow\s^r$ is an eigenmap of degree $p$, or short $\lambda_p$-eigenmap, if all components of $\varphi$ are spherical harmonics of order $p$ on $\s^m$. 
Recall that a spherical harmonic on $\s^m$ of order $p$ is the restriction to $\s^m$ of a homogeneous harmonic polynomial of degree $p$ in $m+1$ variables. 
Equivalently, a spherical harmonic is
an eigenfunction of $\Delta^{\s^m}$ with eigenvalue $\lambda_p=p(p+m-1)$. 
The space of homogeneous harmonic polynomials of degree $p$ in $m+1$ variables will be denoted by $\mathcal{H}^{\R^{m+1}}_{p}$.
Further, the space of spherical harmonics on $\s^m$ of order $p$ will be denoted by $\mathcal{H}^{\s^m}_{p}$.

\smallskip

Below we provide some examples for eigenmaps between spheres.

\begin{enumerate}
    \item \textbf{Standard minimal immersion:}\\
It is well known, see e.g. \cite[Chapter VIII, Section 1]{MR1242555} that
\begin{align*}
\dim\mathcal{H}^{\s^m}_{p}=n(\lambda_p)+1=\frac{(2p+m-1)(p+m-1)!}{(p+1)!(m-1)!}.
\end{align*}
Let
$\{f^j_{\lambda_p}\}_{j=0}^{n(\lambda_p)}\subset\mathcal{H}^{\s^m}_{p}$ be
an orthonormal basis of $\mathcal{H}_{p}(m+1)$ with respect to the scalar product
\begin{align*}
 \langle h_1, h_2\rangle =\frac{n(\lambda_p)+1}{\vol(\s^m)}\int_{\s^m}h_1h_2\dv.
\end{align*}
Then, the map given by
\begin{align*}
f_{\lambda_p}\colon\s^m\rightarrow\s^{n(\lambda_p)}\subset \mathcal{H}^{\s^m}_{p},\,x\mapsto \sum_{j=0}^{n(\lambda_p)}f^j_{\lambda_p}(x)f^j_{\lambda_p}
\end{align*}
is a $\lambda_p$-eigenmap called \textit{standard minimal immersion (of degree $p$ in $m+1$ variables)}, see e.g. \cite[page 66]{MR1280796} or \cite{MR1042100}.
    \item \textbf{Veronese map:}\\
    The Veronese map $V:\s^2\rightarrow\s^4$ is an eigenmap between spheres of degree $2$ which is given by the restriction to $\s^2$ of the map $\Phi:\mathbb{R}^3\rightarrow\mathbb{R}^5$ determined by 
\begin{align*}
(x_1,x_2,x_3)\mapsto \big(\frac{\sqrt{3}}{2}(x_1^2-x_2^2), \frac{1}{2}(x_1^2+x_2^2-2x_3^2), \sqrt{3}x_1x_2, \sqrt{3}x_2x_3, \sqrt{3}x_1x_3\big). 
\end{align*}   
    \item \textbf{Eiconals:}\\ 
    An eiconal is a homogeneous polynomial $E:\R^m\rightarrow\R$ of degree $d$, $d\in\N_0$, if it satisfies
    \begin{align*}
       \| (\mbox{grad}\,E)_x\|^2=\| x\|^{2d} 
    \end{align*}
    for $x\in\R^m$. For such $E$, the map $F:\R^m\rightarrow\R^m$ given by $\varphi=\mbox{grad}\,E_{\lvert \s^{m-1}}:\s^{m-1}\rightarrow\s^{m-1}$ is an eigenmap of degree $d$.
    This construction is due to R. Wood (unpublished note, compare \cite[pages 80 and 103]{MR2044031}).
    \item \textbf{Orthogonal multiplications:}\\ A bilinear map $b:\R^m\times\R^n\rightarrow\R^r$ which satisfies
    \begin{align*}
        \|b(x,y)\|= \| x\| \| y\|
    \end{align*}
    for all $x\in\R^m, y\in\R^n$, is called \textit{orthogonal multiplication}. For $m=n$, the Hopf construction applied to the orthogonal multiplication $b$ is given by
    \begin{align*}
        H(b)(x,y)=(\| x\|^2-\| y\|^2,2b(x,y));
    \end{align*} 
    it gives rise to an eigenmap $\s^{2m-1}\rightarrow\s^r$ of degree $2$, see e.g. \cite[page 62]{MR1280796} or  \cite[pages 205-206]{MR1232826}.\\ Examples of orthogonal multiplications are given by the standard multiplications of the real, complex, quaternionic and Cayley numbers. These multiplications induce the so-called \textbf{Hopf fibrations} $\s^{3}\rightarrow\s^2$, $\s^{7}\rightarrow\s^4$ and $\s^{15}\rightarrow\s^8$, respectively.
\end{enumerate}

An eigenmap $f:\s^m\rightarrow\s^V$ is called \textit{full} if $\mbox{span}\,\mbox{Im}(f)=V$, i.e. the image of $f$ spans $V$.
Any eigenmap $f$ gives rise to a full eigenmap, which we also denote by $f$.

\begin{Bem}
The eiconals also appear in the construction of harmonic self-maps of spheres, compare the first Proposition in \cite[page 304]{MR3193778} and \cite[Theorem\,C]{MR4000241}. 
\end{Bem}

\subsection{Harmonic projection operator}
\label{sub-hmp}
In this subsection we give a survey on the basic ideas of the so-called \lq Harmonic projection operator\rq\, of homogeneous polynomials introduced in \cite{MR229863}.

\smallskip

Set $\mathcal{R}_{\ell}(n)$ to be the space  of homogeneous polynomials of degree $\ell$ in $n$ variables and $\mathcal{H}_{\ell}^{\R^n}$ be the space of homogeneous harmonic polynomials of degree $\ell$ in $n$ variables.
Elementary considerations show that $\mathcal{H}_{\ell}(n)$ is a subspace
of $\mathcal{R}_{\ell}(n)$.

In \cite{Fischer1918} Fischer proved that (see e.g. the introduction of \cite{MR4743396} for a modern presentation of this result), for a given homogeneous polynomial $p(y)$, $y\in\R^n$,
every homogeneous polynomial $f$ of degree $\ell$ can be uniquely decomposed as follows:
\begin{align*}
      f(y)=h_{\ell}(y)+p(y)f_1(y).  
\end{align*}
Here, $f_1$ is a homogeneous polynomial and
$h_{\ell}$ is a homogeneous polynomial of degree $\ell$ which satisfies the partial differential equation
$p(D)h_{\ell}=0$, where $D$ denotes the differential operator which is associated to $y$ by the Fourier identification, i.e. $y_j$ corresponds to $\frac{\partial}{\partial x_j}$ for $j\in\{1,\dots,n\}$.

For the specific case $p(y)=\sum_{i=1}^ny_i^2$ we have $p(D)=\sum_{i=1}^n\frac{\partial^2}{\partial x_i^2}$ and hence $h_{\ell}$ is harmonic. 

Therefore, the following result is a specific case of the Fischer decomposition:

\begin{Prop}[compare page 444 in \cite{MR229863}]\label{prop-proj}
 The space $\mathcal{R}_{\ell}(n)$ of homogeneous polynomials of degree $\ell$ in $n$ variables is the direct sum of the subspace $\mathcal{H}_{\ell}^{\R^n}$ of homogeneous harmonic polynomials of degree $\ell$ in $n$ variables and the subspace $r^2\mathcal{R}_{\ell-2}(n)$ of polynomials of the form $r^2f_1$, where $f_1\in\mathcal{R}_{\ell-2}(n)$. 
\end{Prop}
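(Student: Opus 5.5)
The approach is to equip $\mathcal{R}_\ell(n)$ with the Fischer inner product and show that multiplication by $r^2$ is adjoint to the Laplacian. The decomposition then follows from finite-dimensional linear algebra, with $\mathcal{H}_\ell^{\R^n}$ the orthogonal complement of $r^2\mathcal{R}_{\ell-2}(n)$.

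For homogeneous polynomials $f=\sum_\alpha a_\alpha y^\alpha$ and $g=\sum_\alpha b_\alpha y^\alpha$ of the same degree, set
\begin{align*}
\langle f,g\rangle := f(D)g = \sum_{\alpha}\alpha!\,a_\alpha b_\alpha ,
\end{align*}
where $D$ is the Fourier identification $y_j\leftrightarrow \partial/\partial x_j$ recalled above. The equality of the two expressions comes from checking that $\partial^\alpha y^\beta=\alpha!\,\delta_{\alpha\beta}$ when $|\alpha|=|\beta|$. The monomial formula shows at once that $\langle\cdot,\cdot\rangle$ is symmetric and positive definite on $\mathcal{R}_\ell(n)$.

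The key step, and the only real computation, is the adjointness relation
\begin{align*}
\langle y_i h, g\rangle = \Big\langle h, \frac{\partial g}{\partial x_i}\Big\rangle
\end{align*}
for $h\in\mathcal{R}_{\ell-1}(n)$ and $g\in\mathcal{R}_\ell(n)$. The quickest way to see it is from the definition: $(y_ih)(D)g = h(D)\,\partial_i g$, because constant-coefficient differential operators commute. Applying this twice and summing over $i$ gives, for $f\in\mathcal{R}_{\ell-2}(n)$,
\begin{align*}
\langle r^2 f, g\rangle = \langle f, \Delta g\rangle, \qquad r^2=\sum_{i=1}^n y_i^2 .
\end{align*}
So multiplication by $r^2\colon\mathcal{R}_{\ell-2}(n)\to\mathcal{R}_\ell(n)$ is the adjoint of $\Delta\colon\mathcal{R}_\ell(n)\to\mathcal{R}_{\ell-2}(n)$.

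The conclusion follows from this relation. A polynomial $g\in\mathcal{R}_\ell(n)$ is orthogonal to $r^2\mathcal{R}_{\ell-2}(n)$ if and only if $\langle f,\Delta g\rangle=0$ for every $f\in\mathcal{R}_{\ell-2}(n)$. Taking $f=\Delta g$ and using positive definiteness, this holds if and only if $\Delta g=0$, that is, $g\in\mathcal{H}_\ell^{\R^n}$. Since $\mathcal{R}_\ell(n)$ is finite dimensional and the inner product is definite, we obtain
\begin{align*}
\mathcal{R}_\ell(n)=\mathcal{H}_\ell^{\R^n}\oplus r^2\mathcal{R}_{\ell-2}(n),
\end{align*}
and the sum is even orthogonal. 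The only care needed is the degenerate case $\ell<2$, where $\mathcal{R}_{\ell-2}(n)=0$ and every polynomial of degree $\le 1$ is harmonic, so the statement is trivial.
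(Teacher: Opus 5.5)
Your proof is correct: the Fischer inner product $\langle f,g\rangle=f(D)g$ and the adjointness $\langle r^2f,g\rangle=\langle f,\Delta g\rangle$ give $(r^2\mathcal{R}_{\ell-2}(n))^{\perp}=\mathcal{H}_\ell^{\R^n}$, and hence the direct sum, which is even orthogonal. The paper does not prove the proposition; it quotes Fischer's general decomposition $f=h_\ell+p\,f_1$ with $p(D)h_\ell=0$ and specializes to $p(y)=\sum_i y_i^2$, and your argument is the standard inner-product proof of that result for $p=r^2$ (using $\langle pf_1,g\rangle=\langle f_1,p(D)g\rangle$ for general $p$ gives the full statement), so it is the same route made self-contained.
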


Let $f\in\mathcal{R}_{\ell}(n)$. By Proposition\,\ref{prop-proj} we have
\begin{align*}
    f(x)=h_{\ell}(x)+r^2f_1(x),
\end{align*}
where $f_1\in\mathcal{R}_{\ell-2}(n)$ and $h_{\ell}\in\mathcal{H}_{\ell}^{\R^n}$.
The polynomial $h_{\ell}$ is called the \textit{harmonic projection of the homogeneous polynomial $f$} and it is henceforth denoted by $H(f)$. $H$ is referred to as \textit{harmonic projection operator}.

\smallskip

In \cite[page 446]{MR229863}, an explicit formula for the harmonic projection of $f$ has been obtained:
\begin{align}
\label{hmp-general}
H(f)(x)=\sum_{k=0}^{\lfloor\frac{\ell}{2}\rfloor}\frac{(-1)^kr^{2k}\Delta^kf(x)}{2^kk!(n+2\ell-4)\dots(n+2\ell-2k-2)}.    
\end{align}
Now, assume that, in addition, $f$ is harmonic, i.e. $f\in\mathcal{H}_{\ell}^{\R^n}$.
We obtain the identity
\begin{align}
\label{hmp}
H(x_if)(x)=x_if-\frac{r^2}{m+2p-2}\frac{\partial f}{\partial x_i},
\qquad 1\leq i\leq m.
\end{align}
Indeed, this identity is an immediate consequence of (\ref{hmp-general}),
since the degree of $x_if$ is $\ell+1$ and furthermore
\begin{align*}
 \Delta(x_if(x))=2\frac{\partial f}{\partial x_i}   
\end{align*}
which implies 
\begin{align*}
\Delta^k(x_if(x))=0
\end{align*}
for any $k\geq 2$. 

\subsection{Operators on eigenmaps}
\label{sub-op}
In this subsection we provide some basics on 
operators on eigenmaps between spheres which have been introduced by
 Toth in \cite{MR1241953}. Throughout let $m\in\N$.

 \smallskip

  An operator on eigenmaps between spheres associates a $\lambda_q$-eigenmap with a $\lambda_p$-eigenmap in a natural way.
The next theorem shows that homomorphisms of
orthogonal $\mbox{SO}(m+1)$-modules into the tensor product $(\mathcal{H}_p^{\s^m})^{\star}\otimes\mathcal{H}^{\s^m}_q$ represent a large class of such operators. In order to state the theorem we first need
to define the mathematical setup properly.

In the following, $D$ will be a nonzero module homomorphism of an orthogonal $\mbox{SO}(m+1)$-module $W$ into $(\mathcal{H}_p^{\s^m})^{\star}\otimes\mathcal{H}_q^{\s^m}$. For $e\in W$, the map $D_e:\mathcal{H}_p^{\s^m}\rightarrow \mathcal{H}_q^{\s^m}$ is linear.
Dually, we can think of \(D\) as an $\mbox{SO}(m+1)$-module homomorphism 
\(\iota\colon\mathcal{H}_p^{\s^m}\to W\otimes\mathcal{H}_p^{\s^m}\).
Then, \(D\) and \(\iota\) are related via 
\begin{align*}
\iota^T(e\otimes h)=D_eh,\qquad
\iota(h')=\sum_{i=0}^me_i\otimes D^{T}_{e_i}h' \text{ for } h'\in\mathcal{H}_q^{\s^m},   
\end{align*}
where $\{e_i\}_{i=0}^m\subset W$ is an orthonormal basis. We have
\begin{align*}
    \iota^{T}\circ\iota=c(D)\cdot Id,
\end{align*}
with $c(D)\in\mathbb{R}$, see \cite[equation $(8)$]{MR1241953}.
Let $f:\mathbb{R}^{m+1}\rightarrow\mathbb{R}^{n+1}$ be a polynomial map whose components are in $\mathcal{H}_p^{\R^{m+1}}$. Set $f^D:\mathbb{R}^{m+1}\rightarrow W\otimes\mathbb{R}^{n+1}$ to be the map consisting of the components 
\begin{align*}
    (f^D)_{i}^j=c(D)^{-\frac{1}{2}}D_{e_i}f^j.
\end{align*}
With this preparation at hand we can now state the already above mentioned theorem:

\begin{Thm}[Theorem 1 in \cite{MR1241953}]\label{thm:operators}
    Let $D$ be a nonzero module homomorphism of an orthogonal $\mbox{SO}(m+1)$-module $W$ into $(\mathcal{H}_p^{\s^m})^{\star}\otimes\mathcal{H}_q^{\s^m}$. Given a $\lambda_p$-eigenmap $f:\s^m\rightarrow\s^n$, $f^D$ maps the unit sphere into the unit sphere of $W\otimes\R^{n+1}$ which we denote by $S_{W\otimes\R^{n+1}}$, i.e. $f^D:\s^m\rightarrow S_{W\otimes\R^{n+1}}$ is a $\lambda_q$-eigenmap.
\end{Thm}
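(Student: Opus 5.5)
The plan is to reduce the theorem to a statement about $\mbox{SO}(m+1)$-equivariant maps on symmetric tensors and then use representation theory. The $\lambda_q$-eigenmap property is essentially automatic. By construction every component $(f^D)_i^j=c(D)^{-1/2}D_{e_i}f^j$ lies in $\mathcal{H}_q^{\s^m}$, so all components are eigenfunctions of $\Delta^{\s^m}$ with eigenvalue $\lambda_q$. Hence everything reduces to showing
\begin{align*}
\sum_{i,j}\big(D_{e_i}f^j(x)\big)^2=c(D)\quad\text{for all } x\in\s^m .
\end{align*}

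\textbf{Step 1: encode the eigenmap as a tensor.} I encode $f$ by the symmetric tensor $F=\sum_j f^j\otimes f^j\in S^2\mathcal{H}_p^{\s^m}$. Let $\mu_p\colon S^2\mathcal{H}_p^{\s^m}\to C^\infty(\s^m)$ be the multiplication map $h\otimes h'\mapsto hh'$. Then $f$ is sphere-valued exactly when $\mu_p(F)=1$. Next define
\begin{align*}
T=\sum_i D_{e_i}\otimes D_{e_i}\colon S^2\mathcal{H}_p^{\s^m}\to S^2\mathcal{H}_q^{\s^m}.
\end{align*}
The quantity to compute is then $\mu_q(T F)$. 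Since $D$ is a module homomorphism and $W$ is orthogonal, $T$ does not depend on the choice of orthonormal basis $\{e_i\}$ and is $\mbox{SO}(m+1)$-equivariant. Both $\mu_p$ and $\mu_q$ are equivariant as well.

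\textbf{Step 2: reduce to the standard minimal immersion.} Let $I_p=\sum_k f^k_{\lambda_p}\otimes f^k_{\lambda_p}$ be the invariant tensor coming from the standard minimal immersion. By the addition theorem, $\mu_p(I_p)=1$. Therefore $F-I_p\in\ker\mu_p$. The key claim is
\begin{align*}
T(\ker\mu_p)\subset\ker\mu_q .
\end{align*}
To prove it, I use that the image of $\mu_p$ in $C^\infty(\s^m)$ is $\bigoplus_{k=0}^{p}\mathcal{H}_{2k}^{\s^m}$. Each summand is a class-one representation, i.e. it has an $\mbox{SO}(m)$-fixed vector. I also use that each such class-one type occurs in $S^2\mathcal{H}_p^{\s^m}$ with multiplicity exactly one. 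This is the Do Carmo--Wallach description: $\ker\mu_p$ is precisely the sum of the non-class-one isotypic components. Given this, the claim follows. By equivariance, $T(\ker\mu_p)$ contains no class-one component, while $\mu_q$ annihilates everything that is not class-one, because its target consists only of class-one representations.

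\textbf{Step 3: compute the constant.} By Step 2, $\mu_q(TF)=\mu_q(TI_p)$. Since $T I_p$ is an invariant element of $S^2\mathcal{H}_q^{\s^m}$ and $\mathcal{H}_q^{\s^m}$ is irreducible, $T I_p$ is a multiple $a\,I_q$. Hence $\mu_q(TF)=a$ is constant. To identify $a$ with $c(D)$, I take traces. Working at the level of operators, $T I_p$ corresponds to $\sum_i D_{e_i}D_{e_i}^T$ (up to the normalizing factor built into the scalar product), which is $\iota^T\iota=c(D)\,\mathrm{Id}$ after the correct identification of $D$ with $\iota$. Matching the normalizations of $I_p$ and $I_q$ fixes $a=c(D)$, and dividing by $c(D)$ gives $|f^D|=1$ on $\s^m$.

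\textbf{Main obstacle.} The hardest step is the multiplicity-one and kernel statement in Step 2. I would first try to prove it via Frobenius reciprocity: the multiplicity of a class-one type $\mathcal{H}_{2k}$ in $\mu_p$'s image is at most the dimension of the $\mbox{SO}(m)$-fixed vectors in that type, which is $1$, and the surjectivity of $\mu_p$ onto each $\mathcal{H}_{2k}$ for $k\le p$ gives the rest. The delicate point is to exclude additional class-one copies inside $\ker\mu_p$. For this I would identify the $\mbox{SO}(m)$-fixed vectors in $S^2\mathcal{H}_p^{\s^m}$ with pairs of zonal harmonics and check that $\mu_p$ is injective on them. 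Short of a clean argument, I would fall back on citing the standard Do Carmo--Wallach result.
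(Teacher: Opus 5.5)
The paper gives no proof of this statement; it is quoted from Toth \cite{MR1241953}, so the only comparison available is with Toth's argument. Your proof is correct and is, in different notation, the Do Carmo--Wallach-type argument on which Toth's proof rests. In that dictionary, $\ker\mu_p$ is the Do Carmo--Wallach space $\mathcal{E}_p\subset S^2\mathcal{H}_p^{\s^m}$, and $F-I_p$ is the moduli point $A^TA-I$ of $f=A\circ f_{\lambda_p}$. Your $T$, read on symmetric endomorphisms, is $C\mapsto\iota^T(I_W\otimes C)\iota$ with $\iota\colon\mathcal{H}_q^{\s^m}\to W\otimes\mathcal{H}_p^{\s^m}$; the domain $\mathcal{H}_p^{\s^m}$ written in the paper is a typo.

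Two minor corrections.

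First, in your obstacle paragraph, the $\mbox{SO}(m)$-fixed vectors of $S^2\mathcal{H}_p^{\s^m}$ are not ``pairs of zonal harmonics''. By the multiplicity-free branching $\mathcal{H}_p^{\R^{m+1}}|_{\mbox{SO}(m)}\cong\bigoplus_{j=0}^{p}\mathcal{H}_j^{\R^m}$ (for $m\geq 2$), they form a $(p+1)$-dimensional space. Every class-one module has a one-dimensional fixed space, and $\operatorname{im}\mu_p$ already contains the $p+1$ class-one modules $\mathcal{H}_{2k}^{\s^m}$, $0\leq k\leq p$. This count alone shows that $\ker\mu_p$ has no class-one component. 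The only genuinely nontrivial input is then that $\mu_p$ hits every $\mathcal{H}_{2k}^{\s^m}$, which is exactly the Do Carmo--Wallach fact you propose to cite.

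Second, $a=c(D)$ holds only if the transposes defining $c(D)$ are taken with respect to the normalized inner products $\frac{\dim\mathcal{H}}{\vol(\s^m)}\int_{\s^m}$, i.e. those for which $\mu_p(I_p)=1$. The quickest way to pin down the constant is to integrate: $a\,\vol(\s^m)=\sum_{i,k}\int_{\s^m}(D_{e_i}f^k_{\lambda_p})^2\dv=\frac{\vol(\s^m)}{\dim\mathcal{H}_q^{\s^m}}\tr(\iota^T\iota)=c(D)\vol(\s^m)$.
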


Next, we list several examples of operators on eigenmaps, compare  \cite{MR1241953}.

\begin{Bsp}
\begin{enumerate}
    \item Define $D^{\pm}$ as $\mbox{SO}(m+1)$-module homomorphism of
$\mathcal{H}_1^{\s^m}$ into $(\mathcal{H}_p^{\s^m})^{\star}\otimes\mathcal{H}_{p\pm 1}^{\s^m}$ by
\begin{align*}
 D_e^+h=\mu_p\sum_{i=0}^mc_iH(x_ih),\quad  D_e^-h=\mu_{p-1}^{-1}\sum_{i=0}^mc_i\frac{\partial h}{\partial x_i},    
\end{align*}
where $H$ is the harmonic projection operator, $h\in\mathcal{H}_p^{\R^{m+1}}$, $$e=\sum_{i=0}^mc_ix_i\in \mathcal{H}_1^{\s^m}$$
and $$\mu_p=(p+1)\frac{2p+m-1}{p+m-1}.$$ Note that $D^+$ is a degree raising operator (the degree is raised by $1$) and $D^-$ is a degree lowering operator (the degree is lowered by $1$).

Moreover, note that in Euclidean space \(\R^p\) with coordinates \(x_i,i=1,\ldots,p\) dilatations are generated by the operator
\(\sum_{i=0}^px_i\frac{\partial}{\partial x_i}\).
Hence, the existence of the operator \(D_e^-h\) arises due to one of the symmetries of Euclidean space. 
\item 
Consider the operator acting on $\mathcal{H}_p^{\s^m}$ which is given by
\begin{align*}
    D_e^{1,0}=(p-1)!\sum_{i,k=0}^mc_{i,k}A_{i,k},
\end{align*}
where
$$A_{i,k}=x_k\frac{\partial}{\partial x_i}-x_i\frac{\partial}{\partial x_k}.$$
Given a $\lambda_p$-eigenmap $\varphi:\s^m\rightarrow\s^n$, then $\varphi^D:\s^m\rightarrow S_V$ is again a $\lambda_p$-eigenmap. Here, we have $V=\R^{n+1}\otimes\mbox{SO}(m+1)$ and $S_V$ denotes the Euclidean sphere of radius $1$ in $V$.
Note that the coordinates of $\varphi^D$ are obtained from those of $\varphi$ by infinitesimal rotations in each coordinate plane of $\R^{m+1}$ and that \(A_{i,k}\) is the generator of rotations in Euclidean space.
\item 
Another operator acting on $h\in \mathcal{H}_p^{\s^m}$ is given by
\begin{align*}
    D_e^{2,0}h=(p-1)!\sum_{i,k=0}^mc_{i,k}H(x_k\frac{\partial h}{\partial x_i}+x_i\frac{\partial h}{\partial x_k}),
\end{align*}
where $H$ is the harmonic projection operator and $c_{i,k}\in\R$.
The operator \(x_k\partial x_i+x_i\partial x_k\) is not connected to any symmetry of Euclidean space but rather represents a deformation of it since it does not preserve the Euclidean distance.
\end{enumerate}
\end{Bsp}

\section{Generalized radial projection maps}
\label{sec-nak}
In this section, we reveal the theoretical background used to construct the generalized radial projection \eqref{equator-generalized}. 
These considerations are crucial for the remainder of this manuscript.

\smallskip

By $u^{(n)}$ we denote, as above, the generalized radial projection.
Given this data we now manufacture a new map $v^{(n)}\colon\mathbb{R}^{m}\setminus\{0\}\to\mathbb{R}^{m^{\ell}}$ by setting
\begin{align}
\label{def_v}
v^{(n)}(x):=\norm{x}^n\,u^{(n)}(x).
\end{align}
The next lemma shows that \(v^{(n)}(x)\) is a harmonic homogeneous polynomial.

\begin{Lem}
\label{lem_har}
 The map $v^{(n)}\colon\mathbb{R}^{m}\setminus\{0\}\to\mathbb{R}^{m^{\ell}}$ is a harmonic homogeneous polynomial of degree $n$. In particular, $v^{(n)}$ can be extended uniquely and smoothly to a harmonic homogeneous polynomial of degree $n$ on  $\mathbb{R}^{m}$.   
\end{Lem}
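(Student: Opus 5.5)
We argue by induction on $n$, using Nakauchi's recursion \eqref{nak-rec} together with the harmonic projection identity \eqref{hmp}. (Here $n$ plays the role of $\ell$; the target dimension is $m^n$.)

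\textbf{Base case.} For $n=1$ we have $v^{(1)}_{i}(x)=\norm{x}\,y_i=x_i$. This is a linear, hence harmonic, homogeneous polynomial of degree $1$.

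\textbf{Inductive step.} Multiply the recursion \eqref{nak-rec} by $\norm{x}^n$ and use $y_{i_n}\norm{x}^n=x_{i_n}\norm{x}^{n-1}$. Writing $u^{(n-1)}=\norm{x}^{-(n-1)}v^{(n-1)}$, we compute
\begin{align*}
\norm{x}\frac{\partial}{\partial x_{i}}u^{(n-1)}
=\norm{x}^{-(n-2)}\frac{\partial v^{(n-1)}}{\partial x_{i}}-(n-1)\norm{x}^{-n}x_{i}\,v^{(n-1)}.
\end{align*}
Multiplying by $\norm{x}^n$ gives
\begin{align*}
\norm{x}^{n+1}\frac{\partial}{\partial x_{i}}u^{(n-1)}
=r^2\frac{\partial v^{(n-1)}}{\partial x_{i}}-(n-1)\,x_{i}\,v^{(n-1)}.
\end{align*}
Substituting into the recursion, we obtain
\begin{align*}
v^{(n)}_{i_1\dots i_n}=C_{n,m}\Big(\big(1+\tfrac{n-1}{n+m-3}\big)x_{i_n}v^{(n-1)}_{i_1\dots i_{n-1}}-\tfrac{1}{n+m-3}\,r^2\frac{\partial v^{(n-1)}_{i_1\dots i_{n-1}}}{\partial x_{i_n}}\Big),
\end{align*}
and the prefactor of the first term simplifies to $\tfrac{2n+m-4}{n+m-3}$. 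Hence
\begin{align*}
v^{(n)}_{i_1\dots i_n}=C_{n,m}\frac{2n+m-4}{n+m-3}\Big(x_{i_n}v^{(n-1)}_{i_1\dots i_{n-1}}-\frac{r^2}{2n+m-4}\frac{\partial v^{(n-1)}_{i_1\dots i_{n-1}}}{\partial x_{i_n}}\Big).
\end{align*}
The point to verify carefully is the index placement: the first term in \eqref{nak-rec} should read $y_{i_n}$, not $y_{i_1}$, and I would check this against Nakauchi.

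By the induction hypothesis, $f=v^{(n-1)}_{i_1\dots i_{n-1}}$ is a harmonic homogeneous polynomial of degree $p=n-1$ in $m$ variables. The bracket is then exactly the right-hand side of \eqref{hmp}, since the denominator there is $m+2p-2=m+2n-4$. It therefore equals $H(x_{i_n}f)$, which is a harmonic homogeneous polynomial of degree $n$. One can also check harmonicity directly: $\Delta(x_if)=2\partial_if$ and $\Delta(r^2\partial_if)=2m\,\partial_if+4(n-2)\partial_if$, so the Laplacian of the bracket is $\big(2-\tfrac{2m+4n-8}{2n+m-4}\big)\partial_if=0$.

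\textbf{Extension.} Since $v^{(n)}$ coincides on $\R^m\setminus\{0\}$ with a polynomial, it extends to all of $\R^m$. The extension is unique by continuity, because $\R^m\setminus\{0\}$ is dense.

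The only real obstacle is the bookkeeping in rewriting $\norm{x}\,\partial_i u^{(n-1)}$ in terms of $v^{(n-1)}$, so that the coefficients line up exactly with \eqref{hmp}.
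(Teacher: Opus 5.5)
Your proof is correct, but it runs the logic in the opposite direction from the paper.

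The paper handles polynomiality and homogeneity with a one-line induction. It then gets harmonicity indirectly: the product rule, Euler's identity and \eqref{eq:delta} give $\Delta u^{(n)}=\norm{x}^{-n}\Delta v^{(n)}-n(n+m-2)\norm{x}^{-(n+2)}v^{(n)}$. The paper then imports the identity $\Delta u^{(n)}=-n(n+m-2)\norm{x}^{-2}u^{(n)}$ from Lemma~\ref{lem:delta-nak} (i.e.\ from Nakauchi and \cite{BS25tri}) to conclude $\Delta v^{(n)}=0$.

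You never use that external Laplacian formula. Instead you rewrite the recursion as $v^{(n)}_{i_1\dots i_n}=C_{n,m}\tfrac{2n+m-4}{n+m-3}\,H\big(x_{i_n}v^{(n-1)}_{i_1\dots i_{n-1}}\big)$, and obtain polynomiality, homogeneity and harmonicity in one stroke from \eqref{hmp}. Note that $C_{n,m}\tfrac{2n+m-4}{n+m-3}=\sqrt{\tfrac{2n+m-4}{n+m-3}}=c^{+}_{n-1}$. So your inductive step is exactly the identity $v^{(n)}_{i,j}=c^{+}_{n-1}H(x_i v^{(n-1)}_j)$, which the paper derives only after the lemma and in fact uses the lemma to justify.

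The two routes buy different things. Yours is self-contained, and the link to Toth's operator $D^{+}$ becomes the proof itself. The paper's is shorter once the eigen-identity is granted, and it exhibits the general equivalence, for a degree-$n$ homogeneous $v$, between $\Delta v=0$ and $\Delta(\norm{x}^{-n}v)=-n(n+m-2)\norm{x}^{-2}\cdot\norm{x}^{-n}v$. That equivalence is what reappears in Lemma~\ref{lem:delta}.

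Your suspicion about the index in \eqref{nak-rec} is justified. Read literally with $y_{i_1}$, the recursion gives $v^{(2)}_{12}=C_{2,m}\big(x_1^2+\tfrac{1}{m-1}x_1x_2\big)$, whose Laplacian is $2C_{2,m}\neq 0$. So the intended recursion uses $y_{i_\ell}$, and the paper's own later formula $u^{(n)}_{i,j}=\frac{c^{+}_{n-1}}{\norm{x}^{n}}H\big(x_i\norm{x}^{n-1}u^{(n-1)}_j\big)$ confirms this.
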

\begin{proof}
An induction argument, which makes use of the recursive definition of $u^{(n)}$, shows that $v^{(n)}\colon\mathbb{R}^{m}\setminus\{0\}\to\mathbb{R}^{m^{\ell}}$ is a homogeneous polynomial in the variables $x_1,\dots,x_m$ of degree $n$. Consequently, it can be extended uniquely and smoothly to a harmonic homogeneous polynomial of degree $n$ on  $\mathbb{R}^{m}$.   

\smallskip

The product rule for the Laplacian yields
\begin{align*}
 \Delta u^{(n)}(x)=\norm{x}^{-n}\Delta v^{(n)}(x)+2\sum_{i=1}^n\frac{\partial}{\partial x_i}(\norm{x}^{-n})\frac{\partial}{\partial x_i}(v^{(n)}(x))+\Delta(\norm{x}^{-n})v^{(n)}(x).    
\end{align*}
Since $v^{(n)}(x)$ is a homogeneous polynomial of degree $n$,
we can apply the Euler identity, see e.g.\cite[p. 287]{MR248290}, which gives
\begin{align*}
\sum_{i=1}^nx_i\frac{\partial}{\partial x_i}v^{(n)}(x)=n\,v^{(n)}(x).    
\end{align*}
A straightforward computation yields
\begin{align*}
\frac{\partial}{\partial x_i}\norm{x}^{-n}=-n\norm{x}^{-(n+2)}x_i.    
\end{align*}
The two preceding identities thus imply
\begin{align*}
2\sum_{i=1}^n\frac{\partial}{\partial x_i}(\norm{x}^{-n})\frac{\partial}{\partial x_i}(v^{(n)}(x))=-2n^2\norm{x}^{-(n+2)}\,v^{(n)}(x).    
\end{align*}
Further, a direct calculation implies
\begin{align}
\label{eq:delta}
 \Delta(\norm{x}^{-n})=n(n+2-m)\norm{x}^{-(n+2)}.   
\end{align}
Therefore, we obtain
\begin{align*}
 \Delta u^{(n)}(x)=\norm{x}^{-n}\Delta v^{(n)}(x)-n(n+m-2)\norm{x}^{-(n+2)}v^{(n)}(x).    
\end{align*}
Using Lemma \ref{lem:delta-nak} we find
\begin{align*}
 \Delta u^{(n)}(x)=-n(n+m-2)\norm{x}^{-2}u^{(n)}(x),        
\end{align*}
such that
\begin{align*}
 \Delta v^{(n)}(x)=0,       
\end{align*}
which establishes the claim.
\end{proof}

In \cite{MR1241953} Toth introduced a method to generate new eigenmaps between spheres from given eigenmaps
which we now briefly recall. To this end, let $f:\s^{m-1}\rightarrow\s^{d}$ be a given $\lambda_p$-eigenmap, where $\lambda_p=p(p+m-2)$\footnote{Note that we use the convention $f:\s^{m-1}\rightarrow\s^{d}$, while Toth considers maps $f:\s^{m}\rightarrow\s^{d}$.}.
Then, the polynomial map 
\begin{align*}
f^{+}\colon&\mathbb{R}^m\rightarrow\mathbb{R}^m\otimes\mathbb{R}^{d+1},\\
(f^+)^j_i&=c_p^{+}H(x_if^{j}),
\end{align*}
where $1\leq i\leq m$ and $1\leq j\leq d+1$,
when restricted to the Euclidean sphere, yields an eigenmap between spheres.
Here, we have
\begin{align*}
c_p^{+}&=\sqrt{\frac{2p+m-2}{p+m-2}}\in\mathbb{R},\\
H(x_if^{j})&=x_if^j-\frac{r^2}{2p+m-2}\frac{\partial f^j}{\partial x_i},
\end{align*}
where $H$ is the \textit{harmonic projection operator}, 
which was first introduced in \cite{MR229863}, see also Subsection\,\ref{sub-hmp} for more details.
We will now choose $f$ to be $v^{(n-1)}$, which was defined in (\ref{def_v}), whose restriction to $\s^{m-1}$ is an eigenmap by Lemma\,\ref{lem_har}.
Let $n\in\N$ with $n\geq 2$ be given.
A straightforward calculation yields
\begin{align*}
H\big(x_iv_j^{(n-1)}(x)\big)=&x_iv_j^{(n-1)}(x)-\frac{1}{m+2n-4}\norm{x}^2\frac{\partial}{\partial x_i}v_j^{(n-1)}(x)\\=& \frac{m+n-3}{m+2n-4}x_i \norm{x}^{n-1}u_j^{(n-1)}(x)-\frac{1}{m+2n-4}\norm{x}^{n+1}\frac{\partial}{\partial x_i}u_j^{(n-1)}(x)\\=& \frac{m+n-3}{m+2n-4}\norm{x}^{n}\big(x_i \norm{x}^{-1}u_j^{(n-1)}(x)-\frac{1}{m+n-3}\norm{x}\frac{\partial}{\partial x_i}u_j^{(n-1)}(x)\big). 
\end{align*}
Thus, we get
\begin{align*}
 c_{n-1}^{+}H(x_iv_j^{(n-1)}(x))=\sqrt{\frac{m+n-3}{m+2n-4}}\norm{x}^{n}\big(x_i\norm{x}^{-1}u_j^{(n-1)}(x)-\frac{1}{m+n-3}\norm{x}\frac{\partial}{\partial x_i}u_j^{(n-1)}(x)\big). 
\end{align*}

\smallskip

This calculation yields a link to \cite[Definition 1, Formula (3.2)]{MR4593065}, which is the recursive definition \eqref{nak-rec}.
More precisely, this recursive definition can be rewritten as
\begin{align*}
u_{i,j}^{(n)}=\frac{c_{n-1}^{+}}{\norm{x}^{n}}H\big(x_i\norm{x}^{n-1}u_j^{(n-1)}(x)\big).
\end{align*}
The above formula provides a conceptual explanation for the recursive definition \eqref{nak-rec} of the generalized radial projection. 

The previous observation motivates the following definition.
\begin{Dfn}
\label{def-op}
    Let $\varphi:\s^{m-1}\rightarrow\s^d$ be an eigenmap and denote the associated $(d+1)$-tuple by $\Phi:\mathbb{R}^{m}\rightarrow\mathbb{R}^{d+1}$.
Let $\ell$ be the degree of the homogeneous polynomials which are the components of $\Phi$.
Set 
\begin{align*}
 \nu^{(\ell)}(x):=\frac{\Phi(x)}{\norm{x}^{\ell}},
\end{align*}
and 
\begin{align*}
\nu^{(n)}\colon\mathbb{\R}^{m}\setminus\{0\}\rightarrow&\mathbb{R}^{m^{n-\ell}(d+1)},\\
x \mapsto&\frac{c_{n-1}^{+}}{\norm{x}^{n}}H\big(x_i\norm{x}^{n-1}\nu^{(n-1)}(x)\big),
\end{align*}    
for $n\in\mathbb{N}$ with $n\geq\ell+1$. 

In other words, we define the operator 
$R:(\mathcal{H}^{\R^m}_{\ell})^{d+1}\times \mathbb{\R}^{m}\setminus\{0\}\rightarrow\mathbb{R}^{m(d+1)}$
by
\begin{align*}
R(\Phi,x)\colon=(\frac{c_{n-1}^{+}}{\norm{x}^{n}}H(x_i\Phi))_{1\leq i\leq m}.
\end{align*}
\end{Dfn}

\begin{Bem}
\begin{enumerate}
\item Note that the operator $R$ equals exactly the operator $D^+$, which we discussed in Subsection\,\ref{sub-op}.
    \item 
The considerations before Definition\,\ref{def-op} show that
    in \cite{MR4593065}  Nakauchi considered the case $\ell=1$, $d=m-1$ and $\Phi(x)=x$.
    \end{enumerate}
\end{Bem}

Our next goal is to show that the maps $\nu^{(n)}$ given by Definition\,\ref{def-op} satisfy the properties (1)-(3) discussed in Subsection\,\ref{sub-Nakauchi}, i.e. 
\begin{enumerate}
\item $\nu^{(n)}$ satisfies the equation for harmonic maps to spheres
\begin{align*}
    \Delta \nu^{(n)}+\norm{\nabla\nu^{(n)}}^2 \nu^{(n)}=0; 
\end{align*}
\item $\nu^{(n)}$ is a polynomial in $u_{i_1},\dots, u_{i_{n}}$ of degree $n$, where 
$u_{i_j}=\frac{x_{i_j}}{\norm{x}}$;
\item $\norm{\nabla \nu^{(n)}}^2=\frac{n(n+m-2)}{\norm x^2}$.
\end{enumerate}
For this purpose, we first establish two preparatory lemmas.

\begin{Lem}
\label{lem:delta}
    Let $f:\mathbb{R}^m\rightarrow\mathbb{R}$ be a homogeneous harmonic polynomial of degree $d$ in $m$ variables. Then, the formula
    \begin{align*}
        \Delta(\frac{f}{\norm{x}^d})=-\frac{d(d+m-2)}{\norm{x}^2}\,\frac{f}{\norm{x}^d}
    \end{align*}
    holds true.
\end{Lem}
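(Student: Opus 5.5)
The plan is a direct product-rule computation, following the same steps as the proof of Lemma \ref{lem_har}, this time run forwards with $f$ harmonic. Write $g(x)=\norm{x}^{-d}$. Then
\begin{align*}
\Delta\Big(\frac{f}{\norm{x}^d}\Big)=g\,\Delta f+2\sum_{i=1}^m\frac{\partial g}{\partial x_i}\frac{\partial f}{\partial x_i}+f\,\Delta g .
\end{align*}
We will show that the cross term and the last term combine to $-d(d+m-2)\norm{x}^{-(d+2)}f$.

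The terms are handled one at a time.

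\emph{First term.} It vanishes because $\Delta f=0$ by assumption.

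\emph{Cross term.} Use $\frac{\partial g}{\partial x_i}=-d\norm{x}^{-(d+2)}x_i$. Combined with the Euler identity $\sum_i x_i\frac{\partial f}{\partial x_i}=d f$ for the homogeneous polynomial $f$ of degree $d$, this gives $-2d^2\norm{x}^{-(d+2)}f$.

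\emph{Last term.} For the radial function $g$ we have formula \eqref{eq:delta} with $n=d$. To be safe we recompute it directly: $\Delta(r^{s})=s(s+m-2)r^{s-2}$ with $s=-d$ gives $d(d+2-m)\norm{x}^{-(d+2)}$.

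Summing the three terms yields
\begin{align*}
\big(-2d^2+d^2+2d-dm\big)\norm{x}^{-(d+2)}f=-d(d+m-2)\norm{x}^{-2}\,\frac{f}{\norm{x}^d},
\end{align*}
which is the claimed formula. The computation is valid on $\R^m\setminus\{0\}$, where $g$ is smooth.

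There is no real obstacle. The only point needing care is the sign and constant in $\Delta(\norm{x}^{-d})$; note that the sum over $i$ runs to $m$, where the earlier proof wrote $n$. As a consistency check we will test the case $d=1$, $f=x_1$: the result should be $-(m-1)x_1/\norm{x}^3$, matching the known Laplacian of the radial projection.
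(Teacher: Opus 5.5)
Your proposal is correct and follows the paper's own proof, which is the product rule for the Laplacian, the Euler identity for the cross term and formula \eqref{eq:delta} for $\Delta(\norm{x}^{-d})$; your constants correctly combine to $-d(d+m-2)$. You are also right that the summation index in the proof of Lemma~\ref{lem_har} should run to $m$ rather than $n$.
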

\begin{proof}
 This follows from the product rule for the Laplacian, the Euler formula and identity (\ref{eq:delta}).   
\end{proof}

\begin{Lem}[See VIII, Corollary 1.10 in \cite{MR1242555}]
The eigenmaps $\varphi:\s^{m-1}\rightarrow\s^d$ are represented by $(d+1)$-tuples of harmonic $\ell$-homogeneous polynomials
\begin{align*}
    (\Phi^{\alpha})_{1\leq\alpha\leq d+1}\,\,\mbox{in}\,\,\mathcal{H}_{\ell}^{\R^m}\,\,\mbox{for some}\,\,\ell\in\mathbb{N}
\end{align*}
with
\begin{align*}
    \sum_{\alpha=1}^{d+1}\Phi^{\alpha}(x)^2=1\,\,\mbox{for all}\,\,x\in\s^{m-1}.
\end{align*}
\end{Lem}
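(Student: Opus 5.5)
The plan is to prove both directions of the correspondence. The forward direction rests on the spectral decomposition of $\Delta^{\s^{m-1}}$; the converse is a direct computation using Lemma~\ref{lem:delta}. By Definition~\ref{def:eigen}, $\varphi\colon\s^{m-1}\to\s^d$ is an eigenmap iff the components $\varphi^\alpha$ of $\Phi=\iota\circ\varphi$ satisfy $\Delta^{\s^{m-1}}\varphi^\alpha=-\lambda\varphi^\alpha$ for one common $\lambda$. The condition $\sum_\alpha(\varphi^\alpha)^2=1$ on $\s^{m-1}$ just says that $\varphi$ takes values in $\s^d$. So the content of the lemma is: the eigenfunctions of $\Delta^{\s^{m-1}}$ are exactly the restrictions of elements of $\mathcal{H}_\ell^{\R^m}$, with eigenvalue $\ell(\ell+m-2)$.

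\textbf{Step 1 (harmonic polynomials give eigenfunctions).} Let $f\in\mathcal{H}_\ell^{\R^m}$. Write the Euclidean Laplacian in polar coordinates,
\begin{align*}
\Delta=\partial_r^2+\frac{m-1}{r}\partial_r+\frac{1}{r^2}\Delta^{\s^{m-1}}.
\end{align*}
Apply it to the degree-$0$ homogeneous function $f/\norm{x}^\ell$. Its radial derivatives vanish, so $\Delta(f/\norm{x}^\ell)=\norm{x}^{-2}\,\Delta^{\s^{m-1}}(f|_{\s^{m-1}})$. Comparing with Lemma~\ref{lem:delta} at $\norm{x}=1$ gives
\begin{align*}
\Delta^{\s^{m-1}}(f|_{\s^{m-1}})=-\ell(\ell+m-2)\,f|_{\s^{m-1}}.
\end{align*}
Hence any tuple $(\Phi^\alpha)\subset\mathcal{H}_\ell^{\R^m}$ with $\sum_\alpha(\Phi^\alpha)^2=1$ on $\s^{m-1}$ restricts to an eigenmap. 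This proves the converse direction.

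\textbf{Step 2 (completeness of spherical harmonics).} Let $P_\ell$ denote the space of restrictions of $\mathcal{H}_\ell^{\R^m}$ to $\s^{m-1}$. The map $\ell\mapsto\ell(\ell+m-2)$ is strictly increasing on $\N_0$, so for $\ell\neq\ell'$ the spaces $P_\ell,P_{\ell'}$ lie in distinct eigenspaces of the self-adjoint operator $\Delta^{\s^{m-1}}$ and are $L^2$-orthogonal. Iterating Proposition~\ref{prop-proj} (using $r^2=1$ on the sphere) shows that the restriction of any polynomial lies in $\bigoplus_\ell P_\ell$. By Stone--Weierstrass, restrictions of polynomials are dense in $C^0(\s^{m-1})$, hence in $L^2$. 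Therefore $L^2(\s^{m-1})=\overline{\bigoplus_\ell P_\ell}$.

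\textbf{Step 3 (forward direction).} Let $g$ be an eigenfunction with $\Delta^{\s^{m-1}}g=-\lambda g$. By self-adjointness, $g$ is $L^2$-orthogonal to every $P_\ell$ with $\ell(\ell+m-2)\neq\lambda$. If $\lambda$ is not of the form $\ell(\ell+m-2)$, completeness forces $g=0$. Otherwise $\lambda=\ell(\ell+m-2)$ for a unique $\ell$, and $g\in P_\ell$. Applying this to each component $\varphi^\alpha$, with the same $\lambda$ and hence the same $\ell$, gives $\Phi^\alpha\in\mathcal{H}_\ell^{\R^m}$: namely the homogeneous extension $\norm{x}^\ell\varphi^\alpha(x/\norm{x})$, which is unique because a homogeneous polynomial is determined by its restriction to the sphere. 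The sum-of-squares identity is the sphere-valuedness of $\varphi$.

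The main obstacle is Step 2: the completeness of spherical harmonics, i.e.\ that no eigenfunctions exist besides restrictions of harmonic polynomials. I would handle it as above via Stone--Weierstrass plus the Fischer decomposition, since Proposition~\ref{prop-proj} is already available. Alternatively, it can simply be cited from the classical theory in \cite{MR1242555}.
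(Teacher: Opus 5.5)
The paper gives no proof of this lemma; it only quotes it from \cite{MR1242555}, where it rests on the classical identification of the eigenspaces of $\Delta^{\s^{m-1}}$ with restrictions of $\mathcal{H}_\ell^{\R^m}$. Your proposal is that standard argument (polar form of $\Delta$, completeness via Proposition~\ref{prop-proj} plus Stone--Weierstrass, orthogonality by self-adjointness), and it is correct.

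Two caveats. Strict monotonicity of $\ell\mapsto\ell(\ell+m-2)$ on $\N_0$ requires $m\geq 2$; for $m=1$ the statement actually fails on $\s^0$. Also, constant maps correspond to $\ell=0$, so $\N$ must be read as containing $0$.
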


As announced above, the following theorem shows that the maps $u^{(n)}$ introduced in Definition \,\ref{def-op} satisfy the properties (1)-(3) discussed in Subsection \,\ref{sub-Nakauchi} and thus represent a significant generalization of Nakauchi's construction. 

\begin{Thm}
\label{thm:main-sec3}
Let $\varphi:\s^{m-1}\rightarrow\s^d$ be an eigenmap and denote the associated $(d+1)$-tuple by $\Phi:\mathbb{R}^{m}\rightarrow\mathbb{R}^{d+1}$.
Let $\ell$ be the degree of the homogeneous polynomials which are the components of $\Phi$.
Set 
\begin{align*}
 \nu^{(\ell)}(x):=\frac{\Phi(x)}{\norm{x}^{\ell}},
\end{align*}
and 
\begin{align}
\label{def_nu}
\nu^{(n)}\colon\mathbb{\R}^{m}\setminus\{0\}&\rightarrow\mathbb{R}^{m^{n-\ell}(d+1)},\\
\nonumber x& \mapsto\frac{c_{n-1}^{+}}{\norm{x}^{n}}H(x_i\norm{x}^{n-1}\nu^{(n-1)}(x)),
\end{align}    
for $n\in\mathbb{N}$ with $n\geq\ell+1$. 
The maps $\nu^{(n)}$ are homogeneous polynomials in the variables $(y_i)_{i=1,\dots,m}$, where $y_i\colon=\frac{x_i}{\norm{x}}$
and satisfy the following properties:
\begin{enumerate}
    \item $\nu^{(n)}$ is sphere-valued, i.e. $\norm{\nu^{(n)}(x)}^2=1$ for all $x\in\mathbb{\R}^{m}\setminus\{0\}$;
    \item the energy density of $\nu^{(n)}$ is given by
    \begin{align*}
 \norm{\nabla  \nu^{(n)}(x)}^2=\frac{n(n+m-2)}{\norm{x}^2};
\end{align*}
 \item $\nu^{(n)}$ is a harmonic map, i.e. it is a solution of the differential equation
 \begin{align*}
     \Delta \nu^{(n)}+\norm{\nabla  \nu^{(n)}}^2\nu^{(n)}=0.
 \end{align*}
\end{enumerate}
\end{Thm}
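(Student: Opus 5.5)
The plan is to prove everything by induction on $n\geq\ell$, carrying the statement that $V^{(n)}(x):=\norm{x}^n\nu^{(n)}(x)$ is a tuple of homogeneous harmonic polynomials of degree $n$ on $\R^m$ whose restriction to $\s^{m-1}$ is a $\lambda_n$-eigenmap. All three properties then follow from this single structural fact.

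For the base case $n=\ell$, $V^{(\ell)}=\Phi$, which is the given eigenmap by the Lemma from \cite{MR1242555} recalled above. For the inductive step, observe that by \eqref{def_nu}
\begin{align*}
V^{(n)}_{i,\cdot}=c_{n-1}^{+}H\big(x_iV^{(n-1)}\big),
\end{align*}
which is exactly Toth's operator $f\mapsto f^{+}=f^{D^+}$ applied to the $\lambda_{n-1}$-eigenmap $V^{(n-1)}$; compare the Remark following Definition \ref{def-op}. The components are harmonic and homogeneous of degree $n$ by construction of the harmonic projection operator (Proposition \ref{prop-proj}), and formula \eqref{hmp} shows explicitly that they are polynomials in $x$. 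Theorem \ref{thm:operators} (Toth) gives $\sum_{i,j}(V^{(n)}_{i,j})^2=1$ on $\s^{m-1}$, so $V^{(n)}$ is again an eigenmap. By homogeneity $\nu^{(n)}(x)=V^{(n)}(x/\norm{x})$, so $\nu^{(n)}$ is a polynomial in $y_i$ and $\norm{\nu^{(n)}}^2=1$ on $\R^m\setminus\{0\}$; this is property (1).

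For properties (2) and (3), apply Lemma \ref{lem:delta} componentwise to obtain
\begin{align*}
\Delta\nu^{(n)}=-\frac{n(n+m-2)}{\norm{x}^2}\nu^{(n)}.
\end{align*}
Applying $\Delta$ to $\norm{\nu^{(n)}}^2=1$ gives $0=2\langle\Delta\nu^{(n)},\nu^{(n)}\rangle+2\norm{\nabla\nu^{(n)}}^2$. Hence $\norm{\nabla\nu^{(n)}}^2=n(n+m-2)/\norm{x}^2$, which is property (2). Substituting back into the formula for $\Delta\nu^{(n)}$ yields the harmonic map equation, which is property (3).

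The main obstacle is verifying the normalization in Theorem \ref{thm:operators}: one must check that the constant $c_{n-1}^{+}$ used here coincides with Toth's $c(D^+)^{-1/2}$, so that the image is the unit sphere and not a sphere of some other radius. If matching constants with \cite{MR1241953} proves delicate, I would verify $\norm{V^{(n)}}^2=\norm{x}^{2n}$ directly. Expanding with \eqref{hmp} gives
\begin{align*}
\sum_i\Big|x_iF-\frac{r^2}{2p+m-2}\partial_iF\Big|^2
=r^2|F|^2-\frac{2r^2}{2p+m-2}\,p\,|F|^2+\frac{r^4}{(2p+m-2)^2}|\nabla F|^2,
\end{align*}
where $F=V^{(n-1)}$ and $p=n-1$, using Euler's identity. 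Next, $|\nabla F|^2=p(p+m-2)r^{2p-2}$, which follows from $\Delta|F|^2=\Delta r^{2p}$ together with $\Delta F=0$. Substituting $|F|^2=r^{2p}$ then simplifies the sum to $\frac{p+m-2}{2p+m-2}r^{2p+2}$, so multiplying by $(c_p^+)^2$ gives exactly $r^{2n}$.
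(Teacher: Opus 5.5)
Your argument is correct up to one computational slip, and it takes a different route from the paper.

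\textbf{The slip.} It is in the direct norm computation. The identity you cite gives
\begin{align*}
\norm{\nabla F}^2=\tfrac12\Delta\norm{F}^2=\tfrac12\Delta r^{2p}=p(2p+m-2)\,r^{2p-2},
\end{align*}
not $p(p+m-2)r^{2p-2}$. The value you wrote is $r^{2p}\norm{\nabla\nu^{(n-1)}}^2$, which is only the tangential part of $\nabla F$. The radial derivative $\partial_rF=\frac{p}{r}F$ contributes the missing $p^2r^{2p-2}$.

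With the correct value the bracket is $1-\frac{2p}{2p+m-2}+\frac{p}{2p+m-2}=\frac{p+m-2}{2p+m-2}$, which is the result you state. With your value it would not simplify to that.

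This computation is what actually fixes the radius at $1$. Quoting Theorem~\ref{thm:operators} still leaves open whether $c^+_{n-1}$ matches Toth's normalisation. So present the direct computation as the main argument rather than as a fallback.

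\textbf{Comparison of routes.} The paper inducts on properties (1)--(3) of $\nu^{(n-1)}$ and, following Nakauchi's Proposition~2, works with the $0$-homogeneous maps throughout. It gets $\norm{\nu^{(n)}}=1$ by expanding the recursion using $\langle\nabla\nu^{(n-1)},\nu^{(n-1)}\rangle=0$ and the energy density of $\nu^{(n-1)}$. It gets the harmonic map equation by applying $\Delta$ to the recursion and using the inductive hypotheses.

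You instead induct on the single algebraic statement that $V^{(n)}$ is an eigenmap. Harmonicity of $V^{(n)}$ comes for free, since it is a harmonic projection. So only the norm identity uses the inductive hypothesis; your expansion is the paper's Claim~1 rewritten for $F=r^p\nu^{(n-1)}$, with Euler's identity replacing the orthogonality relation. Properties (2) and (3) then follow at every level from Lemma~\ref{lem:delta} and the constraint, exactly as in the paper's base case.

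Your route buys three things:
\begin{enumerate}
\item It avoids the Laplacian computation on the recursion altogether.
\item It yields the explicit identity $\Delta\nu^{(n)}=-n(n+m-2)\norm{x}^{-2}\nu^{(n)}$, which is the form Claim~2 implicitly needs. From \eqref{id-2} as stated and $\norm{\nu^{(n)}}=1$ alone, \eqref{id-1} does not follow.
\item It transfers verbatim to Theorem~\ref{thm:main2} for an arbitrary homomorphism $D$, where there is no recursion to induct along: Theorem~\ref{thm:operators} supplies the eigenmap property and the base-case argument finishes.
\end{enumerate}
The paper's version has the advantage of mirroring Nakauchi's original computation for the generalized radial projection.
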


\begin{proof}
We prove the claim by induction and start with the base case.

\smallskip

For any $x\in\mathbb{R}^{m}\setminus{\{0\}}$ we have
$\norm{\Phi(\frac{x}{\norm{x}})}^2=1$ and hence $\norm{\Phi(x)}^2=\norm{x}^{2\ell}$. Hence $ \nu^{(\ell)}$ is sphere-valued.

\smallskip

By the very definition of $ \nu^{(\ell)}$ it follows that each component of $\nu^{(\ell)}(x)$ is a polynomial in $\frac{x_1}{\norm{x}},\dots,\frac{x_m}{\norm{x}}$ of degree $\ell$.

\smallskip

In addition, we have
\begin{align*}
 \norm{\nabla  \nu^{(\ell)}(x)}^2=\frac{\ell(\ell+m-2)}{\norm{x}^2}.   
\end{align*}
Indeed: Recall that for an arbitrary homogeneous polynomial $F$ of degree $\ell$ we have
\begin{align*}
\nabla F  =\ell F \frac{x}{\norm{x}}+\norm{x}^{\ell-1}\nabla^{\s^{m-1}}F_{\lvert\s^{m-1}}.  
\end{align*}
Hence, we get
\begin{align*}
\nabla \nu^{(\ell)}(x)&=\frac{\nabla\Phi}{\norm{x}^{\ell}}+\nabla({\norm{x}^{-\ell}})\Phi\\&= \frac{1}{\norm{x}^{\ell}}\big(\ell\Phi \frac{x}{\norm{x}^2}+\norm{x}^{\ell-1}\nabla^{\s^{m-1}}\Phi_{\lvert\s^{m-1}}\big)
- \ell\Phi \frac{x}{\norm{x}^{\ell+2}}= \norm{x}^{-1}\nabla^{\s^{m-1}}\Phi_{\lvert\s^{m-1}}. 
\end{align*}
Thus, the claim follows from the identity $\norm{\nabla^{\s^{m-1}}\Phi_{\lvert\s^{m-1}}}^2=\ell(\ell+m-2)$ which holds since $\Phi_{\lvert\s^{m-1}}$ is an eigenmap.

\smallskip

From Lemma\,\ref{lem:delta} we thus get
that 
\begin{align*}
    \Delta \nu^{(\ell)}+\norm{\nabla \nu^{(\ell)}}^2\nu^{(\ell)}=0,
\end{align*}
i.e. $\nu^{(\ell)}$ is a harmonic map.

\smallskip

The induction step follows along the lines of the proof of \cite[Proposition 2]{MR4593065}.
Nevertheless, we provide the proof for the sake of completeness.
Assume that for one fixed $n\in\N$ with $n\geq \ell+1$, the map $\nu^{(n-1)}$ satisfies 
\begin{align}
\label{ass-1}
\norm{\nu^{(n-1)}(x)}^2=1
\end{align}
for all $x\in\mathbb{\R}^{m}\setminus\{0\}$, as well as the identities
    \begin{align}
    \label{ass-2}
 \norm{\nabla  \nu^{(n-1)}(x)}^2=\frac{(n-1)(n+m-3)}{\norm{x}^2}
\end{align}
and
 \begin{align}
    \label{ass-3}
     \Delta \nu^{(n-1)}+\norm{\nabla  \nu^{(n-1)}}^2\nu^{(n-1)}=0.
 \end{align}

\smallskip

\textit{Claim $1$}: Identities (\ref{ass-1}) and (\ref{ass-2}) yield that $\norm{\nu^{(n)}(x)}^2=1$.
Indeed, by taking the first derivative of (\ref{ass-1}) we obtain
\begin{align*}
 \langle \nabla  \nu^{(n-1)}(x),  \nu^{(n-1)}(x)\rangle=0,   
\end{align*}
where $\langle\,\cdot\,,\,\cdot\,\rangle$ denotes the standard scalar product.
Hence, from (\ref{def_nu}) and using the assumptions (\ref{ass-1}) and (\ref{ass-2}) we get 
\begin{align*}
\norm{\nu^{(n)}(x)}^2=1,
\end{align*}
which establishes Claim $1$.

\smallskip

\textit{Claim $2$}: Identities (\ref{ass-2}), (\ref{ass-3}) and $\norm{\nu^{(n)}(x)}^2=1$ imply the equations 
    \begin{align}
    \label{id-1}
 \norm{\nabla  \nu^{(n)}(x)}^2=\frac{n(n+m-2)}{\norm{x}^2}
\end{align}
and
 \begin{align}
 \label{id-2}
     \Delta \nu^{(n)}+\norm{\nabla  \nu^{(n)}}^2\nu^{(n)}=0.
 \end{align}
Indeed, applying the Laplacian to (\ref{def_nu}) and making use of (\ref{ass-2}) as well as (\ref{ass-3}), a straightforward computation yields that the identity (\ref{id-2}) holds true.
From $\norm{\nu^{(n)}(x)}^2=1$ we obtain by differentiating that 
\begin{align*}
 \langle \nabla  \nu^{(n)}(x),  \nu^{(n)}(x)\rangle=0   
\end{align*}
holds. 
Differentiating again yields 
\begin{align*}
   \norm{ \nabla \nu^{(n)}(x)}^2+\langle  \nu^{(n)}(x),\Delta  \nu^{(n)}(x)\rangle=0. 
\end{align*}
The identity (\ref{id-1}) thus follows by making use of 
(\ref{id-2}) and $\norm{\nu^{(n)}(x)}^2=1$.

\smallskip

Clearly, combining Claim $1$ and Claim $2$ establishes the induction step, whence the proof.
\end{proof}
    
Next we present a number of examples of $\nu^{(n)}$ for some $n\in\N$ and a few prominent choices of $\Phi$.

\begin{Bsp}
\begin{enumerate}
    \item 
The Veronese map $V:\s^2\rightarrow\s^4$ is an eigenmap between spheres which is given by the restriction to $\s^2$ of the map $\Phi:\mathbb{R}^3\rightarrow\mathbb{R}^5$ determined by 
\begin{align*}
(x_1,x_2,x_3)\mapsto \bigg(\frac{\sqrt{3}}{2}(x_1^2-x_2^2), \frac{1}{2}(x_1^2+x_2^2-2x_3^2), \sqrt{3}x_1x_2, \sqrt{3}x_2x_3, \sqrt{3}x_1x_3\bigg). 
\end{align*}    
Each component function of $\Phi$ is a homogeneous harmonic polynomial of degree $\ell=2$.  
We have
\begin{align*}
 \nu^{(2)}(x):=\frac{\Phi(x)}{\norm{x}^{2}}.   
\end{align*}
Below we determine  $\nu^{(3)}:\mathbb{R}^3\setminus\{0\}\rightarrow\s^{14}$. By definition we have 
\begin{align*}
\nu_{i,j}^{(3)}(x) =\frac{c_{2}^{+}}{\norm{x}^{3}}H(x_i\norm{x}^{2}\nu_j^{(2)}(x)),
\end{align*} 
with $c_{2}^{+}=\sqrt\frac{5}{3}$, $i\in\{1,2,3\}$ and $j\in\{1,2,3,4,5\}$.
Straightforward computations give
\begin{align*}
 (\nu_{1,j}^{(3)}(x))_{1\leq j\leq 5}=\frac{1}{\norm{x}^3}\bigg(&\frac{x_1(3x_1^2-7x_2^2-2x_3^2)}{2\sqrt5}, 
 \sqrt{\frac{3}{5}}\frac{x_1(x_1^2+x_2^2-4x_3^2)}{2}, \\
 &\frac{x_2(4x_1^2-x_2^2-x_3^2)}{\sqrt{5}}, 
 \sqrt{5}x_1x_2x_3, \frac{x_3(4x_1^2-x_2^2-x_3^2)}{\sqrt{5}}\bigg),\\   
 (\nu_{2,j}^{(3)}(x))_{1\leq j\leq 5}=\frac{1}{\norm{x}^3}\bigg(&\frac{x_2(7x_1^2-3x_2^2+2x_3^2)}{2\sqrt5}, 
 \sqrt{\frac{3}{5}}\frac{x_2(x_1^2+x_2^2-4x_3^2)}{2},\\ 
 &\frac{x_1(-x_1^2+4x_2^2-x_3^2)}{\sqrt{5}}, \frac{x_3(-x_1^2+4x_2^2-x_3^2)}{\sqrt{5}}, 
 \sqrt{5}x_1x_2x_3\bigg), \\  
 (\nu_{3,j}^{(3)}(x))_{1\leq j\leq 5}=\frac{1}{\norm{x}^3}\bigg(&\frac{\sqrt{5}(x_1^2-x_2^2)x_3}{2}, 
 \sqrt{\frac{3}{5}}\frac{x_3(3x_1^2+3x_2^2-2x_3^2)}{2},\\&\sqrt{5}x_1x_2x_3, 
 \frac{x_2(-x_1^2+x_2^2-4x_3^2)}{\sqrt{5}}, \frac{x_1(-x_1^2-x_2^2+4x_3^2)}{\sqrt{5}}\bigg).   
\end{align*}
Note that $\nu^{(3)}$ is not a full eigenmap. 
By a straightforward computation we find that $\mbox{rank}\,(\nu^{(3)}) =7$. 
Hence, there exists an associated full eigenmap
$\tilde v^{(3)}:\s^2\rightarrow\s^6$.
Since $\dim \mathcal{H}_{3}(3)=7$ the map $\tilde v^{(3)}$ is a standard minimal immersion. Thus, by \cite[Lemma $1$]{MR1241953}, 
each $v^{(n)}:=\norm{x}^n\nu^{(n)}$ with $n\geq 3$ is equivalent to the corresponding standard minimal immersion.
\item The Hopf fibration $H_2:\s^3\rightarrow\s^2$ is an eigenmap between spheres which is given by the restriction to $\s^3$ of the map $\Phi:\mathbb{R}^4\rightarrow\mathbb{R}^3$ determined by 
\begin{align*}
(x_1,x_2,x_3,x_4)\mapsto \big(x_1^2+x_2^2-x_3^2-x_4^2, 2(x_1x_3+x_2x_4), 2(x_1x_4-x_2x_3)\big). 
\end{align*}    
Each component function of $\Phi$ is a homogeneous harmonic polynomial of degree $\ell=2$.  
We have
\begin{align*}
 \nu^{(2)}(x):=\frac{\Phi(x)}{\norm{x}^{2}}.   
\end{align*}
Below we determine  $\nu^{(3)}:\mathbb{R}^4\setminus\{0\}\rightarrow\s^{11}$. By the very definition of $\nu^{(n)}$ we have
\begin{align*}
\nu_{i,j}^{(3)}(x) =\frac{c_{2}^{+}}{\norm{x}^{3}}H(x_i\norm{x}^{2}\nu_j^{(2)}(x)),
\end{align*} 
with $c_{2}^{+}=\sqrt\frac{3}{2}$, $i\in\{1,2,3,4\}$ and $j\in\{1,2,3\}$.
Straightforward computations give
\begin{align*}
 (\nu_{1,j}^{(3)}(x))_{1\leq j\leq 3}&=\frac{1}{\norm{x}^3}\bigg(\sqrt{\frac{2}{3}}x_1(x_1^2+x_2^2-2(x_3^2+x_4^2)),\\&\frac{5x_1^2x_3+6x_1x_2x_4-x_3(x_2^2+x_3^2+x_4^2)}{\sqrt6}, \\&\frac{-6x_1x_2x_3+5x_1^2x_4-x_4(x_2^2+x_3^2+x_4^2)}{\sqrt6}\bigg),\\
(\nu_{2,j}^{(3)}(x))_{1\leq j\leq 3}&=\frac{1}{\norm{x}^3}\bigg(\sqrt{\frac{2}{3}}x_2(x_1^2+x_2^2-2(x_3^2+x_4^2)),\\&\frac{-6x_1x_2x_3+x_1^2x_4+x_4(-5x_2^2+x_3^2+x_4^2)}{\sqrt6},\\&\frac{6x_1x_2x_4+5x_1^2x_3+x_3(-5x_2^2+x_3^2+x_4^2)}{\sqrt6}\bigg),\\
(\nu_{3,j}^{(3)}(x))_{1\leq j\leq 3}&=\frac{1}{\norm{x}^3}\bigg(-\sqrt{\frac{2}{3}}x_3(-2x_1^2-2x_2^2+(x_3^2+x_4^2)),\\&\frac{x_1^3+6x_2x_3x_4-x_1(x_2^2-5x_3^2+x_4^2)}{\sqrt6},\\&\frac{x_1^2x_2+6x_1x_3x_4+x_2(x_2^2-5x_3^2+x_4^2)}{\sqrt6}\bigg),\\
(\nu_{4,j}^{(3)}(x))_{1\leq j\leq 3}&=\frac{1}{\norm{x}^3}\bigg(-\sqrt{\frac{2}{3}}x_4(-2x_1^2-2x_2^2+(x_3^2+x_4^2)),\\&\frac{-x_1^2x_2+6x_1x_3x_4-x_2(x_2^2+x_3^2-5x_4^2)}{\sqrt6},
\\&-\frac{x_1^3+6x_2x_3x_4+x_1(x_2^2+x_3^2-5x_4^2)}{\sqrt6}\bigg). 
\end{align*}
Further, $\dim\,\mbox{im}(\nu^{(3)})=8$.
Since $\dim \mathcal{H}_{3}(4)=16$ the map $\nu^{(3)}$ is not equivalent to the standard minimal immersion.
\item 
The map $H_1:\s^1\rightarrow\s^1$ is an eigenmap between spheres which is given by the restriction to $\s^1$ of the map $\Phi:\mathbb{R}^2\rightarrow\mathbb{R}^2$ determined by 
\begin{align*}
(x_1,x_2)\mapsto \big(x_1^2-x_2^2, 2x_1x_2\big). 
\end{align*}    
Each component function of $\Phi$ is a homogeneous harmonic polynomial of degree $\ell=2$.  
We have
\begin{align*}
 \nu^{(2)}(x):=\frac{\Phi(x)}{\norm{x}^{2}}.   
\end{align*}
Note that $v^{(n)}(x)$ induces a full eigenmap $\tilde{v}^{(n)}(x):\s^1\rightarrow\s^1$ which gives rise to  $\tilde{\nu}^{(n)}(x):=\frac{\tilde{v}^{(n)}(x)}{\norm{x}^{n}}$ with $\tilde{\nu}^{(n)}(x):\mathbb{R}^2\rightarrow\s^1$.
Then, the components of $\tilde{v}^{(n)}$ are given by the (rescaled) real bivariate harmonic polynomials of degree $n$.
\item In \cite{MR4593065} Nakauchi considered the case
\begin{align*}
 \nu^{(1)}(x)=\frac{x}{\norm{x}}.   
\end{align*}
Since $v^{(1)}=\norm{x}\nu^{(1)}$ is a standard minimal immersion,
\cite[Lemma $1$]{MR1241953} yields that 
each $v^{(n)}:=\norm{x}^n\nu^{(n)}$ with $n\geq 1$ is equivalent to the corresponding standard minimal immersion.
\end{enumerate}
\end{Bsp}

Using Theorem\,\ref{thm:operators} we can now easily generalize 
Theorem\,\ref{thm:main-sec3}:

\begin{Thm}
\label{thm:main2}
Let $\varphi:\s^{m-1}\rightarrow\s^{d-1}$ be an eigenmap and denote the associated $d$-tuple by $\Phi:\mathbb{R}^{m}\rightarrow\mathbb{R}^{d}$.
Let $p$ be the degree of the homogeneous polynomials which are the components of $\Phi$.
Set 
\begin{align*}
 \nu^{(p)}(x):=\frac{\Phi(x)}{\norm{x}^{p}}.
\end{align*}
Let $D$ be a nonzero module homomorphism of an orthogonal $\mbox{SO}(m)$-module $W$ into $(\mathcal{H}_p)^{\star}\otimes\mathcal{H}_q$.
We define $\nu^{(q)}$ as follows
\begin{align*}
\nu^{(q)}\colon&\mathbb{\R}^{m}\setminus\{0\}\rightarrow\mathbb{R}^{d \dim W},\\
&x \mapsto\frac{1}{\norm{x}^{q}}(\norm{x}^{p}\nu^{(p)}(x))^D.
\end{align*}    
The map $\nu^{(q)}$ is a homogeneous polynomial in the variables $(y_i)_{i=1,\dots,m}$, where $y_i\colon=\frac{x_i}{\norm{x}}$
and satisfies the following properties:
\begin{enumerate}
    \item $\nu^{(q)}$ is sphere-valued, i.e.$\norm{\nu^{(q)}(x)}^2=1$ for all $x\in\mathbb{\R}^{m}\setminus\{0\}$;
    \item the energy density of $\nu^{(q)}$ is given by
    \begin{align*}
 \norm{\nabla  \nu^{(q)}(x)}^2=\frac{q(q+m-2)}{\norm{x}^2};
\end{align*}
 \item $\nu^{(q)}$ is a harmonic map, i.e. it is a solution of the differential equation
 \begin{align*}
     \Delta \nu^{(q)}+\norm{\nabla  \nu^{(q)}}^2\nu^{(q)}=0.
 \end{align*}
\end{enumerate}
\end{Thm}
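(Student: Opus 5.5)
The plan is to reduce everything to Toth's Theorem\,\ref{thm:operators} and to the base-case computation already carried out in the proof of Theorem\,\ref{thm:main-sec3}.

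\emph{Step 1: $\nu^{(q)}$ comes from an eigenmap of degree $q$.} Set $F:=\norm{x}^p\nu^{(p)}=\Phi$. By assumption $\Phi$ is a $d$-tuple of elements of $\mathcal{H}_p^{\R^m}$ whose restriction to $\s^{m-1}$ is a $\lambda_p$-eigenmap. Since $D_e$ maps $\mathcal{H}_p$ into $\mathcal{H}_q$, every component $(\Phi^D)^j_i=c(D)^{-1/2}D_{e_i}\Phi^j$ lies in $\mathcal{H}_q^{\R^m}$. Here I identify spherical harmonics with their unique homogeneous harmonic extensions. Theorem\,\ref{thm:operators}, applied with $\s^{m-1}$ in place of $\s^m$, gives that $\Phi^D|_{\s^{m-1}}$ maps into the unit sphere of $W\otimes\R^d$ and is a $\lambda_q$-eigenmap. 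So $\Psi:=\Phi^D\colon\R^m\to\R^{d\dim W}$ is a tuple of homogeneous harmonic polynomials of degree $q$ with $\norm{\Psi}=1$ on $\s^{m-1}$, and
\begin{align*}
\nu^{(q)}(x)=\frac{\Psi(x)}{\norm{x}^q}=\Psi\Big(\frac{x}{\norm{x}}\Big).
\end{align*}
By homogeneity this already shows that $\nu^{(q)}$ is a homogeneous polynomial of degree $q$ in the variables $y_i=x_i/\norm{x}$.

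\emph{Step 2: properties (1)--(3).} With $\Psi$ playing the role of $\Phi$ and $q$ the role of $\ell$, the argument is exactly the base case of Theorem\,\ref{thm:main-sec3}.
\begin{enumerate}
\item Homogeneity and $\norm{\Psi}=1$ on $\s^{m-1}$ give $\norm{\Psi(x)}^2=\norm{x}^{2q}$, hence $\norm{\nu^{(q)}}=1$.
\item Writing $\nabla\Psi=q\Psi\,x/\norm{x}^2+\norm{x}^{q-1}\nabla^{\s^{m-1}}\Psi|_{\s^{m-1}}$ and subtracting the radial term coming from $\nabla\norm{x}^{-q}$ leaves $\nabla\nu^{(q)}=\norm{x}^{-1}\nabla^{\s^{m-1}}\Psi|_{\s^{m-1}}$. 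For an eigenmap, $\norm{\nabla^{\s^{m-1}}\Psi|_{\s^{m-1}}}^2$ equals the eigenvalue $q(q+m-2)$. This follows from differentiating $\norm{\Psi}^2=1$ twice on the sphere together with $\Delta^{\s}\Psi=-q(q+m-2)\Psi$. Hence $\norm{\nabla\nu^{(q)}}^2=q(q+m-2)/\norm{x}^2$.
\item Lemma\,\ref{lem:delta}, applied componentwise to $\Psi$, gives $\Delta\nu^{(q)}=-q(q+m-2)\norm{x}^{-2}\nu^{(q)}$. Combined with (2), this is the harmonic map equation.
\end{enumerate}

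\emph{Main obstacle.} The only delicate point is Step 1: checking that the setup of Theorem\,\ref{thm:operators} transfers correctly. This means the dimension shift from $\s^m$ to $\s^{m-1}$ (so $\mathrm{SO}(m)$ replaces $\mathrm{SO}(m+1)$), the identification of $\mathcal{H}_p^{\s^{m-1}}$ with $\mathcal{H}_p^{\R^m}$, and the normalization $c(D)^{-1/2}$ built into $(\cdot)^D$. That normalization is precisely what makes the image unit length, so $\nu^{(q)}$ has the stated target $\R^{d\dim W}$ and lies on its unit sphere. Everything after that is the routine computation already done in the base case of the previous theorem.
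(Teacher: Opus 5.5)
Your proposal is correct and follows the route the paper indicates: Toth's Theorem~\ref{thm:operators}, transferred to $\s^{m-1}$ and $\mathrm{SO}(m)$, makes $\Phi^D$ a $\lambda_q$-eigenmap, and properties (1)--(3) then follow exactly as in the base case of the proof of Theorem~\ref{thm:main-sec3}. The paper omits these details as ``analogous''; your write-up supplies them, including the justification of $\norm{\nabla^{\s^{m-1}}\Psi}^2=q(q+m-2)$ obtained by differentiating $\norm{\Psi}^2=1$ twice on the sphere.
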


We omit the proof of Theorem\,\ref{thm:main2} since it is analogous to that of Theorem\,\ref{thm:main-sec3}.

\smallskip

Clearly, one can apply the same construction method to $\nu^{(q)}$.
Thus one recursively defines a sequence $(\nu^{(n_i)})_{i\in\mathbb{N}}$ of maps satisfying properties (1)-(3) of the statement of Theorem\,\ref{thm:main2}. Note that the sequence $n_i$ is determined by $D$, $d$, $n_1$ and $\dim W$.

\bibliographystyle{plain}
\bibliography{mybib}

\end{document}